\documentclass[11pt]{article}
\usepackage{pifont}

\usepackage{amsfonts}
\usepackage{latexsym}
\usepackage{amsmath}
\usepackage{amssymb}
\usepackage{color}

 \setlength{\parskip}{3pt plus1pt minus2pt}
 \setlength{\baselineskip}{20pt plus2pt minus1pt}
 \setlength{\textheight}{21true cm}
 \setlength{\textwidth}{14.5true cm}

\newtheorem{theorem}{Theorem}[section]

\newtheorem{proposition}{Proposition}[section]

\newtheorem{corollary}{Corollary}[section]

\newenvironment{proof}[1][Proof]{\noindent \textbf{#1.} }{\ \ \  $\Box$}

\newtheorem{lemma}{Lemma}[section]
\newtheorem{definition}{Definition}[section]
\newtheorem{remark}{Remark}[section]

\title{Symmetrical Solutions
of Backward Stochastic Volterra Integral Equations and their
applications \thanks{This work is supported by National Natural
Science Foundation of China Grant 10771122, Natural Science
Foundation of Shandong Province of China Grant Y2006A08 and National
Basic Research Program of China (973 Program, No. 2007CB814900).}}

\date{May 28 2010}

\author{Tianxiao Wang and Yufeng Shi \thanks{E-mail: xiaotian2008001@gmail.com yfshi@sdu.edu.cn}\\ \small{School of
Mathematics, Shandong University, Jinan 250100, China}}

\begin{document}

\maketitle

\begin{abstract}
Backward stochastic Volterra integral equations (BSVIEs in short)
are studied. We introduce the notion of adapted symmetrical
solutions (S-solutions in short), which are different from the
M-solutions introduced by Yong \cite{Y2}. We also give some new
results for them. At last a class of dynamic coherent risk measures
were derived via certain BSVIEs.

 $\textit{Keywords:}$ Backward stochastic Volterra integral equations, Adapted
symmetrical solutions, Dynamic coherent risk measure
\end{abstract}



\section{Introduction}\label{sec:intro}

Let $(\Omega ,\mathcal{F},\mathbb{F},\mathbb{P})$ be a complete
filtered probability space on which a $d$-dimensional Brownian
motion $W(\cdot )$ is defined with $\mathbb{F}\equiv
\{{\mathcal{F}_t}\}_{t\geq 0}$ being its natural filtration
augmented by all the $\mathbb{P}$-null sets. In this paper, we
consider the following stochastic integral equations:
\begin{equation}
Y(t)=\Psi
(t)+\int_t^Tg(t,s,Y(s),Z(t,s),Z(s,t))ds-\int_t^TZ(t,s)dW(s), \quad
t\in [0,T].
\end{equation}
where $g:\Delta ^c\times R^m\times R^{m\times d}\times R^{m\times
d}\times \Omega \rightarrow R^m$ and $\Psi :[0,T]\times \Omega
\rightarrow R^m$ are some given measurable mappings with $\Delta
^c=\left\{ (t,s)\in [0,T]^2\mid t\leq s\right\} $. Such an equation
is referred as a backward stochastic Volterra integral equation
(BSVIE in short) (see
 \cite{Y1} and \cite{Y2}).

When $\Psi(\cdot)$, $g$, $Z(t,s)$ are independent of $t$, $g$ is
also independent of $Z(s,t)$, BSVIE (1) is reduced to a nonlinear
backward stochastic differential equation (BSDE in short)
\begin{equation*}
Y(t)=\xi+\int_t^Tg(s,Y(s),Z(s))ds-\int_t^TZ(s)dW(s),
\end{equation*}
which was introduced by Pardoux and Peng in \cite{PP1}, where the
existence and uniqueness of $\mathbb{F}$-adapted solutions are
established under uniform Lipschitz conditions on $g$. Due to their
important significance in many fields such as financial mathematics,
optimal control, stochastic games and partial differential equations
and so on, the theory of BSDEs has been extensively developed in the
past two decades. The reader is referred to \cite{EPQ}, \cite{HL},
\cite{MY}, \cite{PP2}, \cite{P1} and \cite{P2}.

On the other hand, stochastic Volterra integral equations were
firstly studied by Berger and Mizel in \cite{BM1} and \cite{BM2},
then they were investigated by Protter in \cite{P3} and Pardoux and
Protter in \cite{PP3}. Lin \cite{L} firstly introduced a kind of
nonlinear backward stochastic Volterra integral equations of the
form
\begin{equation*}
Y(t)=\xi+\int_t^Tg(t,s,Y(s),Z(t,s))ds-\int_t^TZ(t,s)dW(s).
\end{equation*}
But there is a gap in \cite{L}. At the same time Yong \cite{Y1} also
investigated a more general version of BSVIEs, as the type of (1),
and gave their applications to optimal control. In this paper, we
give a further discussion to BSVIE (1).

Based on the martingale presentation theorem, especially for
$Z(t,s),t\geq s$, Yong \cite{Y2} introduced the concept of
M-solutions for BSVIE (1). He gave some conditions that suffice
BSVIE (1) is uniquely solvable. We realize that there should be
other kinds of solutions for BSVIEs. In this paper, we introduce the
notion of symmetrical solutions (called S-solutions) in the way of
$Z(t,s)\equiv Z(s,t),t,s\in [0,T]$. It is worthy to point out that
S-solutions should be solved in a more general Hilbert space, which
is different from the one for M-solution, see more detailed accounts
in Section 2. We prove the existence and uniqueness of S-solutions
for BSVIEs. Some properties such as the continuity of $Y(t)$ are
obtained. We then study the relations between S-solutions and other
solutions. We give the notion of adapted solutions of (1) ($g$ is
independent of $Z(s,t)$) and obtain the existence and uniqueness by
virtue of the results of S-solution, which cover the ones in
\cite{L} and overcome its gap. Some relations between S-solutions
and M-solutions are studied. By two examples we show that the two
solutions usually are not equal, especially their values in
$\Delta=\left\{ (t,s)\in [0,T]^2\mid t>s\right\}.$ We also give a
criteria for S-solutions of BSVIEs. At last by giving a comparison
theorem for S-solutions of certain BSVIEs, we show a class of
dynamic coherent risk measures by means of S-solutions for certain
BSVIEs.

This paper is organized as follows: in the next section, we give
some preliminary results. In Section 3, we prove the existence and
uniqueness theorem of S-solutions of (1) and show some corollaries
and some other new results on S-solution. In Section 4 we give a
class of dynamic coherent risk measures by means of the S-solutions
of a kind of BSVIEs.


\section{Preliminary results}
\subsection{Notations and Definitions}

In this subsection we give some notations and definitions that are
needed in the following. For any $R,S\in[0,T],$ in the following we
denote $\Delta ^c[R,S]=\{(t,s)\in[R,S]^{2}; t\leq s\}$ and $\Delta
[R,S]=\{(t,s)\in[R,S]^{2}; t>s\}.$ Let $L_{\mathcal{F}_T}^2[0,T]$ be
the set of $\mathcal {B}([0,T])\otimes\mathcal{F}_T $-measurable
processes $X:[0,T]\times \Omega \rightarrow R^m$ satisfying
$$E\int_0^T|X(t)|^2dt<\infty .$$ We denote
\begin{equation*}
\mathbb{H}^2[R,S]=L_{\mathbb{F}}^2(\Omega ;C[R,S])\times L_{\mathbb{F}%
}^2(\Omega ;L^2[R,S]).
\end{equation*}
which is a Banach space under the norm:
\begin{equation*}
\Vert (y(\cdot ),z(\cdot ))\Vert _{\mathbb{H}^2[R,S]}=\left[
E\sup_{t\in [R,S]}|y(t)|^2+E\int_R^S|z(t)|^2dt\right] ^{\frac 12}.
\end{equation*}
Here $L_{\mathbb{F}}^2(\Omega ;C[R,S])$ is the set of all continuous adapted processes $%
X:[R,S]\times \Omega \rightarrow R^m$ satisfying
\begin{equation*}
E\left[ \sup_{t\in [R,S]}|X(t)|^2\right] <\infty.
\end{equation*}
$L_{\mathbb{F}}^2(\Omega ;L^2[R,S])$ is the set of all adapted processes $%
X:[R,S]\times \Omega \rightarrow R^{m\times d}$ satisfying
\begin{equation*}
E\int_R^S|X(t)|^2dt<\infty .
\end{equation*}
We denote
\begin{align*}
^{*}\mathcal{H}^2[R,S]=&L_{\mathbb{F}}^2[R,S]\times
L^2_{max;\mathbb{F}}(R,S;L^2[R,S]),
\\
\mathcal{H}^2[R,S]=&L_{\mathbb{F}}^2[R,S]\times L^2(R,S;L_{\mathbb{F}%
}^2[R,S]).
\end{align*}
Here $L^2_{max;\mathbb{F}}(R,S;L^2[R,S])$ is the set of all processes $%
z:[R,S]^2\times \Omega \rightarrow R^{m\times d}$ such that for almost all $%
t\in [R,S]$, $s\rightarrow z(t,s)$ is $\mathcal{F}_{t\vee
s}$-measurable satisfying
\begin{equation*}
E\int_R^S\int_R^S|z(t,s)|^2dsdt<\infty.
\end{equation*}
$L^2(R,S;L_{\mathbb{F}}^2[R,S])$ is the set of all processes $%
z:[R,S]^2\times \Omega \rightarrow R^{m\times d}$ such that for almost all $%
t\in [R,S]$, $z(t,\cdot )\in L_{\mathbb{F}}^2[R,S]$ satisfying
\begin{equation*}
E\int_R^S\int_R^S|z(t,s)|^2dsdt<\infty,
\end{equation*}
where $L_{\mathbb{F}}^2[R,S]=L_{\mathbb{F}}^2(\Omega ;L^2[R,S]).$

We also define the norm of the elements in $\mathcal{H}^2[R,S]$:
\begin{equation*}
\Vert (y(\cdot ),z(\cdot ,\cdot ))\Vert
_{\mathcal{H}^2[R,S]}=\left\{
E\int_R^S|y(t)|^2dt+E\int_R^S\int_R^S|z(t,s)|^2dsdt\right\} ^{1/2}.
\end{equation*}
As to the norm of the element in $^{*}\mathcal{H}^{2}[R,S],$
\begin{equation*}
\Vert (y(\cdot ),z(\cdot ,\cdot ))\Vert
_{^{*}\mathcal{H}^2[R,S]}=\left\{
E\int_R^S|y(t)|^2dt+E\int_R^S\int_R^S|z(t,s)|^2dsdt\right\} ^{1/2}.
\end{equation*}
From the definitions above, we know that $\mathcal{H}^2[R,S]$ is a
complete subspace of $^{*}\mathcal{H}^{2}[R,S]$ under the norm
above. Similarly we denote
\begin{equation*}
\mathcal{H}^2_1[R,S]=L_{\mathbb{F}}^2[R,S]\times L^2(R,S;L_{\mathbb{F}%
}^2[t,S]).
\end{equation*}
Here $L^2(R,S;L_{\mathbb{F}}^2[t,S])$ is the set of all processes $%
z:\Delta^c[R,S]\times \Omega \rightarrow R^{m\times d}$ such that for almost all $%
t\in [R,S]$, $z(t,\cdot )\in L_{\mathbb{F}}^2[R,S]$ satisfying
\begin{equation*}
E\int_R^S\int_t^S|z(t,s)|^2dsdt<\infty.
\end{equation*}

We also define the norm of the elements in $\mathcal{H}^2_1[R,S]$:
\begin{equation*}
\Vert (y(\cdot ),z(\cdot ,\cdot ))\Vert
_{\mathcal{H}^2_1[R,S]}=\left\{
E\int_R^S|y(t)|^2dt+E\int_R^S\int_t^S|z(t,s)|^2dsdt\right\} ^{1/2}.
\end{equation*}
 We now give the definition of M-solutions, introduced by Yong \cite{Y2}.
 \begin{definition}
Let $S\in [0,T]$. A pair of $(Y(\cdot ),Z(\cdot ,\cdot ))\in \mathcal{H}%
^2[S,T]$ is called an adapted M-solution of BSVIE (1) on $[S,T]$ if
(1) holds in the usual It\^o's sense for almost all $t\in [S,T]$
and, in addition, the following holds:
\begin{equation*}
Y(t)=E\left( \left. Y(t)\right| \mathcal{F}_S\right)
+\int_S^tZ(t,s)dW(s).
\end{equation*}
\end{definition}
In this paper, we introduce the concept of S-solutions as follows.
\begin{definition}
Let $S\in [0,T]$. A pair of $(Y(\cdot ),Z(\cdot ,\cdot ))\in  {^{*}\mathcal{H}}%
^2[S,T]$ is called an adapted symmetrical solution (also called
S-solution) of BSVIE (1) on $[S,T]$ if (1) holds in the usual
It\^o's sense for almost all $t\in [S,T]$ and, in addition, the
following holds:
\begin{equation*}
Z(t,s)=Z(s,t), \quad t,s\in [S,T].
\end{equation*}
\end{definition}
The following simple example shows the reason for considering the
S-solution in $^{*}\mathcal{H}^2[0,T]$, rather than
$\mathcal{H}^2[0,T].$ Let us consider the simple BSVIE
\begin{eqnarray*}
Y(t)=tW^2(T)-\int_t^T1ds-\int_t^TZ(t,s)dW(s).
\end{eqnarray*}
It is easy to show that $\forall (t,s)\in\Delta^c,$ $Y(t)=tW^2(t),$
$Z(t,s)=2tW(s)$ is the adapted solution of the above equation. By
the definition of S-solution, we have $\forall (t,s)\in\Delta,$
$Z(t,s)=Z(s,t)=sW(t),$ obviously we get
$(Y(\cdot),Z(\cdot,\cdot))\in^{*}\mathcal{H}^2[0,T],$ instead of
$\mathcal{H}^2[0,T].$

On the other hand, it is easy to see that $\forall (t,s)\in[0,T]^2$
and $(Y(\cdot),Z(\cdot,\cdot))\in\mathcal{H}^2[0,T]$, $Z(t,s)$ is
$\mathcal{F}_{0}$-measurable, i.e., almost surely deterministic
function if we consider the S-solution of BSVIE (1) in
$\mathcal{H}^2[0,T].$ In fact, in this case, for any
$(t,s)\in[0,T]^2,$ $Z(t,s)$ ($Z(s,t)$ respectively) is
$\mathcal{F}_{s}$-measurable ($\mathcal{F}_{t}$-measurable
respectively), then by $Z(t,s)=Z(s,t)$ we obtain that $Z$ should be
$\mathcal{F}_{0}$-measurable.

Now we cite some definitions introduced in \cite{Y3}.
\begin{definition}
A mapping $\rho :L_{\mathcal{F}_T}^2[0,T]\rightarrow
L_{\mathbb{F}}^2[0,T]$ is called a dynamic risk measure if the
following hold:

1) (Past independence) For any $\Psi (\cdot ),$ $\overline{\Psi }(\cdot )\in L_{\mathcal{F}%
_T}^2[0,T],$ if $\Psi (s)=\overline{\Psi }(s),$ a.s. $\omega \in \Omega ,$ $%
s\in [t,T],$ for some $t\in [0,T),$ then $\rho (t;\Psi (\cdot ))=\rho (t;%
\overline{\Psi }(\cdot )),$ a.s. $\omega \in \Omega .$

2) (Monotonicity) For any $\Psi (\cdot ),$ $\overline{\Psi }(\cdot )\in L_{\mathcal{F}%
_T}^2[0,T],$ if $\Psi (s)\leq \overline{\Psi }(s),$ a.s. $\omega \in
\Omega , $ $s\in [t,T],$ for some $t\in [0,T),$ then $\rho (s;\Psi
(\cdot ))\geq \rho (s;\overline{\Psi }(\cdot )),$ a.s. $\omega \in
\Omega, \text{ } s\in[t,T].$
\end{definition}
\begin{definition}
A dynamic risk measure $\rho :L_{\mathcal{F}_T}^2[0,T]\rightarrow L_{\mathbb{%
F}}^2[0,T]$ is called a coherent risk measure if the following hold:
1) There exists a deterministic integrable function $r(\cdot )$ such
that for
any $\Psi (\cdot )\in L_{\mathcal{F}_T}^2[0,T],$%
\begin{equation*}
\rho (t;\Psi (\cdot )+c)=\rho (t;\Psi (\cdot
))-ce^{\int_t^Tr(s)ds},\text{ a.s. }\omega \in \Omega ,\text{ }t\in
[0,T].
\end{equation*}
2) For $\Psi (\cdot )\in L_{\mathcal{F}_T}^2[0,T]$ and $\lambda >0,$
$\rho (t;\lambda \Psi (\cdot ))=\lambda \rho (t;\Psi (\cdot ))$ a.s.
$\omega \in \Omega ,$ $t\in [0,T].$
3) For any $\Psi (\cdot ),$ $\overline{\Psi }(\cdot )\in L_{\mathcal{F}%
_T}^2[0,T],$
\begin{equation*}
\rho (t;\Psi (\cdot )+\overline{\Psi }(\cdot ))\leq \rho (t;\Psi
(\cdot
))+\rho (t;\overline{\Psi }(\cdot )),\text{ a.s. }\omega \in \Omega ,\text{ }%
t\in [0,T].
\end{equation*}
\end{definition}

\subsection{Some lemmas for S-solutions}

First we give some lemmas for S-solutions. For any $R,S\in [0,T],$
let us consider the following stochastic integral equation
\begin{equation}
\lambda (t,r)=\Psi (t)+\int_r^Th(t,s,\mu (t,s))ds-\int_r^T\mu
(t,s)dW(s),\quad r\in [S,T],\quad t\in [R,T].
\end{equation}
where $h:[R,T]\times [S,T]\times R^{m\times d}\times \Omega
\rightarrow R^m$ is given. The unknown processes are $(\lambda
(\cdot ,\cdot ),\mu (\cdot ,\cdot ))$,
for which $%
(\lambda (t,\cdot ),\mu (t,\cdot ))$ are $\mathbb{F}$-adapted for
all $t\in [R,T]$. We can regard (2) as a family of BSDEs on $[S,T]$,
parameterized by $t\in [R,T]$. Next we introduce the following
assumption of $h$ in (2).

(H1) Let $R,S\in [0,T]$ and $h:[R,T]\times [S,T]\times R^{m\times
d}\times \Omega \rightarrow R^m$ be $\mathcal{B}([R,T]\times
[S,T]\times R^{m\times
d})\otimes \mathcal{F}_T$-measurable such that $s\mapsto h(t,s,z)$ is $%
\mathbb{F}$-progressively measurable for all $(t,z)\in [R,T]\times
R^{m\times d}$ and
\begin{equation}
E\int_R^T\left( \int_S^T|h(t,s,0)|ds\right) ^2dt<\infty .
\end{equation}
Moreover, the following holds:
\begin{equation}
|h(t,s,z_1)-h(t,s,z_2)|\leq L(t,s)|z_1-z_2|,(t,s)\in [R,T]\times
[S,T],z_1,z_2\in R^{m\times d},
\end{equation}
where $L:[R,T]\times [S,T]\rightarrow [0,\infty )$ is a
deterministic function such that for some $\varepsilon >0$,
$$\sup\limits_{t\in [R,T]}\int_S^TL(t,s)^{2+\epsilon }ds<\infty .$$
\begin{proposition}
Let (H1) hold, then for any $\Psi (\cdot )\in
L_{\mathcal{F}_T}^2[R,T]$, (2) admits a unique adapted solution
$(\lambda (t,\cdot ),\mu (t,\cdot ))\in \mathbb{H}^2[S,T]$ for
almost all $t\in [R,T],$ and the following estimate holds: $t\in
[R,T]$
\begin{eqnarray}
\left\| (\lambda (t,\cdot ),\mu (t,\cdot ))\right\| _{\mathbb{H}%
^2[S,T]}^2\equiv E\left\{ \sup_{r\in [S,T]}|\lambda
(t,r)|^2+\int_S^T|\mu
(t,s)|^2ds\right\}  \nonumber \\
\ \leq CE\left\{ |\Psi (t)|^2+\left( \int_S^T|h(t,s,0)|ds\right)
^2\right\}.
\end{eqnarray}
\end{proposition}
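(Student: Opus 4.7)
For each fixed $t\in[R,T]$, equation (2) is a BSDE on $[S,T]$ (in the variable $r$) with terminal value $\Psi(t)$ and generator $h(t,\cdot,\cdot)$ depending only on $z$, with a deterministic but \emph{non-uniform} Lipschitz coefficient $L(t,\cdot)$. The standard Picard iteration in the unweighted $L^2$ norm would not contract, so the plan is to run a weighted-norm contraction in the spirit of stochastic-Lipschitz BSDE theory and then convert the resulting weighted bound into the unweighted estimate (5), using the $L^{2+\varepsilon}$ integrability of $L$ to get constants that are uniform in $t$.

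\emph{Step 1 (existence and uniqueness).} Set $\phi(s):=\exp\bigl(\beta\int_S^s L(t,u)^2\,du\bigr)$ with $\beta>1$, and equip $L^2_{\mathbb{F}}(\Omega;L^2[S,T])$ with the norm $\|\nu\|_\beta^2:=E\int_S^T\phi(s)|\nu(s)|^2\,ds$. Given $\tilde\mu$, define $\Phi(\tilde\mu)$ to be the Brownian integrand in the martingale representation of $\Psi(t)+\int_S^T h(t,s,\tilde\mu(s))\,ds$, with the induced $\lambda$ recovered as a conditional expectation. For two inputs $\tilde\mu_1,\tilde\mu_2$, apply It\^o's formula to $\phi(r)|\bar\lambda(r)|^2$ where $\bar\lambda=\lambda_1-\lambda_2$, $\bar\mu=\Phi(\tilde\mu_1)-\Phi(\tilde\mu_2)$. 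The key point is that the term $\beta L(t,s)^2\phi|\bar\lambda|^2$ generated by $d\phi$ exactly cancels the remainder coming from bounding $2\phi\bar\lambda\bigl(h(t,s,\tilde\mu_1)-h(t,s,\tilde\mu_2)\bigr)$ by Young's inequality $2ab\le\beta L^2a^2+\beta^{-1}b^2$. What survives, after taking expectation, is
$$E\phi(r)|\bar\lambda(r)|^2+E\int_r^T\phi|\bar\mu|^2\,ds\le\beta^{-1}E\int_r^T\phi|\tilde\mu_1-\tilde\mu_2|^2\,ds,$$
so that $\Phi$ is a $\beta^{-1/2}$-contraction; its fixed point $\mu(t,\cdot)$ with the associated $\lambda(t,\cdot)$ gives the unique adapted solution on $[S,T]$.

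\emph{Step 2 (a priori estimate (5)).} Repeat the It\^o computation on $\phi(r)|\lambda(t,r)|^2$ using (2) and $|h(t,s,\mu)|\le L(t,s)|\mu|+|h(t,s,0)|$. The $L(t,s)^2|\lambda|^2$ terms again cancel, while the residual term $2\phi|\lambda||h(t,s,0)|$ is estimated by
$$\int_r^T 2\phi|\lambda||h(t,s,0)|\,ds\le\tfrac{1}{2}\sup_{s\in[S,T]}\phi(s)|\lambda(s)|^2+C\Bigl(\int_S^T|h(t,s,0)|\,ds\Bigr)^2,$$
after using that $\phi(s)\le\phi(T)$. This yields the $E\int_S^T|\mu|^2\,ds$ half of (5). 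For the $E\sup_r|\lambda(t,r)|^2$ half, take the supremum in (2), bound $\sup_r|\int_r^T\mu(t,s)dW(s)|^2$ by the Burkholder-Davis-Gundy inequality, and absorb it using the bound on $\mu$ just obtained.

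\emph{Step 3 (uniformity in $t$).} By H\"older's inequality with exponents $(2+\varepsilon)/2$ and $(2+\varepsilon)/\varepsilon$,
$$\int_S^T L(t,u)^2\,du\le(T-S)^{\varepsilon/(2+\varepsilon)}\Bigl(\sup_{t\in[R,T]}\int_S^T L(t,u)^{2+\varepsilon}\,du\Bigr)^{2/(2+\varepsilon)},$$
which is finite by (H1). Hence $\phi(T)$ is bounded by a constant independent of $t$, the weighted norm is equivalent to the unweighted one with $t$-uniform constants, and (5) follows with $C$ not depending on $t$. The principal obstacle is the non-uniform Lipschitz coefficient $L(t,s)$: the weight $\phi$ must be chosen so that the extra term $\beta L^2\phi|\bar\lambda|^2$ it introduces \emph{exactly} cancels the Young-inequality remainder (otherwise a spurious $|\bar\lambda|^2$ survives on the right and contraction fails), and the $L^{2+\varepsilon}$ rather than $L^2$ hypothesis in (H1) is precisely what makes $\phi$, and hence the constant $C$ in (5), uniform in $t$.
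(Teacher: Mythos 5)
Your argument is essentially correct, but note that the paper does not actually prove Proposition 2.1: its entire ``proof'' is the sentence that the result can be found in \cite{Y2}. You are therefore supplying a self-contained argument where the authors only cite, and your route also differs from the one used in \cite{Y2} and implicitly relied on throughout this paper. There, the hypothesis $\sup_t\int_S^T L(t,s)^{2+\epsilon}\,ds<\infty$ is exploited directly in the \emph{unweighted} norm: the H\"older estimate of your Step 3, in the form $\bigl(\int_r^T L(t,s)|z(s)|\,ds\bigr)^2\le (T-r)^{\epsilon/(2+\epsilon)}\bigl(\int_r^T L(t,s)^{2+\epsilon}ds\bigr)^{2/(2+\epsilon)}\int_r^T|z(s)|^2ds$, makes the Picard map a contraction on a short interval $[T-\delta,T]$ with $\delta$ independent of $t$ (this is exactly the mechanism of (16) and of the choice of $\eta$ and $T_1$ in the proof of Theorem 3.1), after which finitely many intervals are patched together. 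Your El Karoui--Huang weight $\phi(s)=\exp\bigl(\beta\int_S^s L(t,u)^2du\bigr)$ instead yields a contraction on all of $[S,T]$ in one stroke, with the $L^{2+\epsilon}$ condition relegated to showing $\phi(T)$, hence the constant $C$ in (5), is uniform in $t$; since the generator here does not depend on $\lambda$, either device works, and yours avoids the interval-patching at the cost of a slightly heavier It\^o computation. One point to tighten in Step 2: the bound $\int_r^T 2\phi|\lambda||h(t,s,0)|\,ds\le\tfrac12\sup_s\phi(s)|\lambda(s)|^2+C\bigl(\int_S^T|h(t,s,0)|ds\bigr)^2$ places $E\sup_s\phi|\lambda|^2$ on the right of an identity whose left side, at that stage, controls only $E\,\phi(r)|\lambda(t,r)|^2$ at fixed $r$, so it cannot be absorbed there; the absorption must be done after the Doob/BDG bound on $E\sup_r|\lambda(t,r)|^2$ (most cleanly via $\lambda(t,r)=E[\Psi(t)+\int_r^T h\,ds\mid\mathcal F_r]$ and Doob's inequality). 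This is the standard bootstrapping and closes without difficulty, so it is a presentational reordering rather than a genuine gap.
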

\begin{proof}The proof of Proposition 1 can be found in \cite{Y2}.
\end{proof}

Now we look at one special case of (2). Let $R=S$ and define
\begin{equation*}
\left\{
\begin{array}{lc}
Y(t)=\lambda (t,t), & t\in [S,T], \\
Z(t,s)=\mu (t,s), & (t,s)\in \Delta ^c[S,T].%
\end{array}
\right.
\end{equation*}
Then the above (2) reads:
\begin{equation}
Y(t)=\Psi (t)+\int_t^Th(t,s,Z(t,s))ds-\int_t^TZ(t,s)dW(s),\quad t\in
[S,T].
\end{equation}

Here we define $Z(t,s)$ for $(t,s)\in \Delta[S,T]$ by the following
relation $Z(t,s)=Z(s,t)$, which is different from the
 way of defining M-solution, and that's why we call it S-solution
  of (6). So
we have the following lemma.

\begin{lemma}
Let (H1) hold, then for any $S\in [0,T],$ $\Psi (\cdot )\in L_{\mathcal{F}%
_T}^2[S,T]$, (6) admits a unique adapted S-solution $(Y(\cdot
),Z(\cdot ,\cdot ))\in ^{*}\mathcal{H}^2[S,T]$, and the following
estimate holds: $t\in [S,T],$
\begin{equation}
E\left\{ |Y(t)|^2+\int_t^T|Z(t,s)|^2ds\right\} \leq CE\left\{ |\Psi
(t)|^2+\left( \int_t^T|h(t,s,0)|ds\right) ^2\right\}.
\end{equation}
Hereafter $C$ is a generic positive constant which may be different from line to line. If $\overline{h}$ also satisfies (H1), $\overline{\Psi }(\cdot )\in L_{%
\mathcal{F}_{T}}^{2}[S,T],$ and $(\overline{Y}(\cdot
),\overline{Z}(\cdot ,\cdot ))\in ^{*}\mathcal{H}^{2}[S,T]$ is the
unique adapted S-solution of BSVIE
(6) with $(h,\Psi )$ replaced by $(\overline{h},\overline{\Psi }),$ then $%
\forall t\in \lbrack S,T],$
\begin{eqnarray}
&&\ E\left\{ |Y(t)-\overline{Y}(t)|^{2}+\int_{t}^{T}|Z(t,s)-\overline{Z}%
(t,s)|^{2}ds\right\}  \nonumber \\
\  &\leq &CE\left[ |\Psi (t)-\overline{\Psi }(t)|^{2}+\left(
\int_{t}^{T}|h(t,s,Z(t,s))-\overline{h}(t,s,Z(t,s))|ds\right)
^{2}\right] .
\end{eqnarray}%
Furthermore, for any $t,\overline{t}\in \lbrack S,T]$,
\begin{eqnarray}
&& E\left\{ |Y(t)-Y(\overline{t})|^{2}+\int_{t\vee \overline{t}}^{T}|Z(t,s)-Z(%
\overline{t},s)|^{2}ds\right\} \nonumber \\
&\leq & CE\left\{ |\Psi (t)-\Psi (\overline{t})|^{2}+\left(
\int_{t\wedge \overline{t}}^{t\vee \overline{t}}|h(t\wedge
\overline{t},s,Z(t\wedge
\overline{t},s))|ds\right) ^{2}\right. \nonumber \\
&&+ \left. \int_{t\wedge \overline{t}}^{t\vee
\overline{t}}|Z(t\wedge
\overline{t},s)|^{2}ds+\left( \int_{t\vee \overline{t}}^{T}|h(t,s,Z(t,s))-h(%
\overline{t},s,Z(t,s))|ds\right) ^{2}\right\} .
\end{eqnarray}
\end{lemma}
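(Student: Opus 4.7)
The plan is to treat BSVIE (6) as a $t$-parameterized family of BSDEs on $[S,T]$ and harvest everything we need from Proposition 1 and the classical BSDE a priori estimates. For each fixed $t\in[S,T]$, consider the BSDE on $r\in[t,T]$ with terminal $\Psi(t)$ and generator $h(t,\cdot,\cdot)$; (H1) restricted to this slice satisfies the hypothesis of Proposition 1, so there exists a unique $(\lambda(t,\cdot),\mu(t,\cdot))\in\mathbb{H}^2[t,T]$ solving (2). Define $Y(t):=\lambda(t,t)$ and $Z(t,s):=\mu(t,s)$ on $\Delta^c[S,T]$, and extend $Z$ to all of $[S,T]^2$ by the symmetry rule $Z(t,s):=Z(s,t)$. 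The equation (6) then holds in the usual sense, and the pair $(Y,Z)$ lies in ${}^{*}\mathcal{H}^2[S,T]$: the measurability requirement $s\mapsto Z(t,s)$ being $\mathcal{F}_{t\vee s}$-measurable follows because for $s\ge t$ this is $\mathcal{F}_s$-adaptedness of $\mu(t,\cdot)$, while for $s<t$ one has $Z(t,s)=\mu(s,t)$ which is $\mathcal{F}_t$-measurable; and the square integrability follows from (5). For uniqueness, any other S-solution produces, by restriction to $\Delta^c$, an adapted solution of the same parameterized BSDE, so it coincides with $(\lambda(t,\cdot),\mu(t,\cdot))$ on $\Delta^c$ by Proposition 1, and then the symmetry rule fixes it on $\Delta$.

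The estimate (7) is obtained by specializing (5) to $r=t$, discarding the supremum and noting the norm on the left of (7) is weaker. For the stability estimate (8), I would write the BSDE satisfied by the pair $(\lambda(t,\cdot)-\overline{\lambda}(t,\cdot),\mu(t,\cdot)-\overline{\mu}(t,\cdot))$ on $[t,T]$, with terminal $\Psi(t)-\overline{\Psi}(t)$ and generator difference $h(t,s,Z(t,s))-\overline{h}(t,s,\overline{Z}(t,s))$. Splitting this generator as
\[
\bigl[h(t,s,Z(t,s))-\overline{h}(t,s,Z(t,s))\bigr]+\bigl[\overline{h}(t,s,Z(t,s))-\overline{h}(t,s,\overline{Z}(t,s))\bigr],
\]
absorbing the Lipschitz part into the left hand side via the standard BSDE a priori estimate, and evaluating at $r=t$, yields (8).

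The main obstacle will be the continuity estimate (9), which requires comparing solutions coming from \emph{different} parameter values. Assume without loss of generality $t<\overline{t}$, and split $[t,T]=[t,\overline{t}]\cup[\overline{t},T]$. On $[\overline{t},T]$, both $\lambda(t,\cdot)$ and $\lambda(\overline{t},\cdot)=Y(\cdot)_{|}$ satisfy BSDEs sharing the common terminal horizon $T$, so their difference solves a BSDE with terminal $\Psi(t)-\Psi(\overline{t})$ and generator $h(t,s,Z(t,s))-h(\overline{t},s,Z(\overline{t},s))$; the BSDE a priori estimate (applied at $r=\overline{t}$) bounds
\[
E\Bigl\{|\lambda(t,\overline{t})-Y(\overline{t})|^2+\int_{\overline{t}}^{T}|Z(t,s)-Z(\overline{t},s)|^2ds\Bigr\}
\]
by the terminal difference and the last generator-difference term appearing in (9). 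It remains to relate $Y(t)=\lambda(t,t)$ to $\lambda(t,\overline{t})$: from the BSDE satisfied by $\lambda(t,\cdot)$,
\[
Y(t)-\lambda(t,\overline{t})=\int_{t}^{\overline{t}}h(t,s,Z(t,s))ds-\int_{t}^{\overline{t}}Z(t,s)dW(s),
\]
so squaring, taking expectations and using It\^o isometry produces exactly the two middle terms on the right hand side of (9). Combining via the triangle inequality $Y(t)-Y(\overline{t})=(Y(t)-\lambda(t,\overline{t}))+(\lambda(t,\overline{t})-Y(\overline{t}))$ and replacing $t,\overline{t}$ by $t\wedge\overline{t},t\vee\overline{t}$ in the short-interval contributions yields (9). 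The delicate bookkeeping is making sure that the Lipschitz term from (H1), which brings in $\int_{\overline{t}}^T L(t,s)^2 ds$ when absorbing the driver, is controlled uniformly in $t$ by the integrability hypothesis on $L^{2+\varepsilon}$, so that the constant $C$ in (9) depends only on the Lipschitz data, not on $(t,\overline{t})$.
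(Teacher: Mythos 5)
Your proposal is correct and follows essentially the same route as the paper, which simply invokes Proposition 1 for existence/uniqueness and refers to Yong's paper \cite{Y2} for the estimates (7)--(9); what you have written is precisely the standard argument being cited there (parameterized BSDE family plus symmetry extension, the BSDE a priori/stability estimate for (7)--(8), and the split of $[t\wedge\overline{t},T]$ at $t\vee\overline{t}$ with It\^o isometry on the short interval for (9)). Your remark about controlling $\int_{t}^{T}L(t,s)^{2}ds$ uniformly via H\"older and the $L^{2+\epsilon}$ hypothesis is exactly the point that makes the constant $C$ uniform in $t$.
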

\begin{proof} From Proposition 1 the existence and uniqueness of S-solution in $[S,T]$
is clear. As to the other estimates, the proof is the same as the
one in \cite{Y2}.
\end{proof}

Let's give another special case. Let $r=S\in [R,T]$ be fixed. Define
\begin{equation*}
\Psi ^S(t)=\lambda (t,S),\quad Z(t,s)=\mu (t,s),\text{ \quad }t\in [R,S],%
\text{ \quad }s\in [S,T].
\end{equation*}
Then (2) becomes:
\begin{equation}
\Psi ^S(t)=\Psi
(t)+\int_S^Th(t,s,Z(t,s))ds-\int_S^TZ(t,s)dW(s),\quad t\in [R,S],
\end{equation}
and we have the following result.

\begin{lemma}
Let (H1) hold, then for any $\Psi (\cdot )\in
L_{\mathcal{F}_T}^2[R,S]$, $(10)$ admits a unique adapted solution
$(\Psi ^S(\cdot ),Z(\cdot ,\cdot ))\in
L_{\mathcal{F}_S}^2[R,S]\times L^2(R,S;L_{\mathbb{F}}^2[S,T])$, and
the following estimate holds: $ t\in [R,S],$
\begin{equation}
E\left\{ |\Psi ^S(t)|^2+\int_S^T|Z(t,s)|^2ds\right\} \leq CE\left\{
|\Psi (t)|^2+\left( \int_S^T|h(t,s,0)|ds\right) ^2\right\}.
\end{equation}
\end{lemma}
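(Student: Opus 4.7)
The plan is to view (10) as a particular slice of the parameterized family of BSDEs (2). Applying Proposition 1 with the given $R$ and $S$, for each fixed $t\in[R,T]$ there exists a unique $(\lambda(t,\cdot),\mu(t,\cdot))\in\mathbb{H}^2[S,T]$ solving (2). Restricting the parameter to $t\in[R,S]$ and setting
$$\Psi^S(t):=\lambda(t,S),\qquad Z(t,s):=\mu(t,s),\qquad (t,s)\in[R,S]\times[S,T],$$
we take $r=S$ in (2) and immediately recover (10). Uniqueness of $(\Psi^S,Z)$ in the claimed space is inherited from the $t$-wise uniqueness of $(\lambda(t,\cdot),\mu(t,\cdot))$ in $\mathbb{H}^2[S,T]$.

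Next, I would verify that $(\Psi^S(\cdot),Z(\cdot,\cdot))\in L_{\mathcal{F}_S}^2[R,S]\times L^2(R,S;L_{\mathbb{F}}^2[S,T])$ and establish (11). For each fixed $t$, $\Psi^S(t)=\lambda(t,S)$ is $\mathcal{F}_S$-measurable because $\lambda(t,\cdot)$ is $\mathbb{F}$-adapted on $[S,T]$. Applying estimate (5) from Proposition 1 and simply dropping the supremum,
$$E\left\{|\Psi^S(t)|^2+\int_S^T|Z(t,s)|^2ds\right\}\le CE\left\{|\Psi(t)|^2+\left(\int_S^T|h(t,s,0)|ds\right)^2\right\},$$
which is precisely (11). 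Integrating this bound over $t\in[R,S]$ and invoking $\Psi\in L_{\mathcal{F}_T}^2[R,S]$ together with the growth condition (3) in (H1) yields the required $L^2$-finiteness of both components.

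The main obstacle is ensuring joint measurability in the parameter $t$, namely that $t\mapsto\lambda(t,S)$ is a genuine (measurable) process on $[R,S]$ and $(t,s,\omega)\mapsto\mu(t,s,\omega)$ is jointly measurable, rather than only defined $t$-wise up to $\mathbb{P}$-null sets. A standard way to handle this is to run the Picard iteration underlying Proposition 1 uniformly in $t$: starting from $(\lambda^0,\mu^0)\equiv 0$, the contraction operator (built from conditional expectations and stochastic integrals applied to $h(t,\cdot,\cdot)$) preserves joint $(t,s,\omega)$-measurability at each step after taking a progressively measurable version, and the contraction constant from (H1) is uniform in $t\in[R,S]$, so joint measurability passes to the limit. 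Once this has been secured, Fubini's theorem converts the $t$-wise estimate into the space membership asserted by the lemma and makes (11) rigorous.
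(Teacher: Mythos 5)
Your proposal is correct and follows the same route as the paper, which simply observes that the result is immediate from Proposition 1 by taking $r=S$ and reading off the estimate (5); your additional care about joint measurability in the parameter $t$ and the integration of (11) over $t\in[R,S]$ fills in details the paper leaves implicit.
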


\begin{proof}From Proposition 1 the result is obvious. \end{proof}


\section{Well-posedness of S-solutions for BSVIEs}


\subsection{The existence and uniqueness of S-solutions}

In this subsection we will give the existence and uniqueness of
S-solutions. For it we need the following standing assumption.

(H2) Let $g:\Delta ^c\times R^m\times R^{m\times d}\times R^{m\times
d}\times \Omega \rightarrow R^m$ be $\mathcal{B}(\Delta ^c\times
R^m\times R^{m\times d}\times R^{m\times d})\otimes
\mathcal{F}_T$-measurable such that $s\rightarrow g(t,s,y,z,\zeta )$
is $\mathbb{F}$-progressively measurable for all $(t,y,z,\zeta )\in
[0,T]\times R^m\times R^{m\times d}\times R^{m\times d}$ and
\begin{equation}
E\int_0^T\left( \int_t^T|g_0(t,s)|ds\right)^2dt<\infty ,
\end{equation}
where we denote $g_0(t,s)\equiv g(t,s,0,0,0).$ Moreover, it holds
\begin{eqnarray*}
|g(t,s,y,z,\zeta)-g(t,s,\overline{y},\overline{z},\overline{\zeta
})| &\leq
&L(t,s)\left( |y-\overline{y}|+|z-\overline{z}|+|\zeta -\overline{\zeta }%
|\right) ,  \text{ a.s.} \\
\forall (t,s) &\in &\Delta ^c,\quad y,\overline{y}\in R^m,\quad z,\overline{z%
},\zeta ,\overline{\zeta }\in R^{m\times d},
\end{eqnarray*}
where $L:\Delta ^c\rightarrow R$ is a deterministic function such
that the following holds: for some $\epsilon >0,$
\begin{equation*}
\sup_{t\in [0,T]}\int_t^TL(t,s)^{2+\epsilon }ds<\infty .
\end{equation*}
So we have:
\begin{theorem}
Let (H2) hold, then for any $\Psi(\cdot)\in
L_{\mathcal{F}_T}^2[0,T]$, (1) admits a unique adapted S-solution on
$[0,T]$. Moreover, the following estimate holds: $\forall S\in
[0,T],$
\begin{eqnarray}
\left\| (Y(\cdot ),Z(\cdot,\cdot ))\right\|
_{^{*}\mathcal{H}^2[S,T]}^2\equiv
E\left\{\int^T_S|Y(t)|^2dt+\int_S^T\int_S^T|Z (t,s)|^2dsdt\right\}  \nonumber \\
\ \leq CE\left\{ \int_S^T|\Psi (t)|^2dt+\int_S^T\left(
\int_t^T|g_{0}(t,s)|ds\right) ^2dt\right\} .
\end{eqnarray}
\end{theorem}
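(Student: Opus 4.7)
The plan is to realize the S-solution as the unique fixed point of a self-map on $^{*}\mathcal{H}^{2}[S,T]$ that builds the symmetry $Z(t,s)=Z(s,t)$ into its definition, following the general strategy of \cite{Y2} while replacing the $\mathcal{H}^{2}$-framework with the larger space $^{*}\mathcal{H}^{2}$.

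For the construction of the map, given any candidate $(y(\cdot),z(\cdot,\cdot))\in {^{*}\mathcal{H}^{2}[S,T]}$ with $z(t,s)=z(s,t)$, and for each fixed $t\in[S,T]$, I would consider the BSDE on $[t,T]$
\begin{equation*}
\lambda(t,r)=\Psi(t)+\int_{r}^{T}g\bigl(t,s,y(s),\mu(t,s),z(s,t)\bigr)ds-\int_{r}^{T}\mu(t,s)dW(s).
\end{equation*}
Under (H2) the driver $(s,\zeta)\mapsto g(t,s,y(s),\zeta,z(s,t))$ satisfies (H1) with Lipschitz coefficient $L(t,s)$, so Proposition 1 produces a unique $(\lambda(t,\cdot),\mu(t,\cdot))\in\mathbb{H}^{2}[t,T]$. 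Setting $Y(t):=\lambda(t,t)$ and $Z(t,s):=\mu(t,s)$ on $\Delta^{c}[S,T]$, and extending $Z$ to $\Delta[S,T]$ by $Z(t,s):=Z(s,t)=\mu(s,t)$, one checks that $(Y,Z)\in {^{*}\mathcal{H}^{2}[S,T]}$, since for $s<t$ the value $\mu(s,t)$ is $\mathcal{F}_{t}=\mathcal{F}_{s\vee t}$-measurable. Fixed points of the resulting map $\Theta:(y,z)\mapsto(Y,Z)$ are then exactly the adapted S-solutions of (1).

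To establish contractivity, I would apply the stability estimate (8) of Lemma 2.1 to the BSDEs associated with two inputs $(y_{i},z_{i})$, $i=1,2$. Pointwise in $t$ this gives
\begin{equation*}
E\Bigl\{|Y_{1}(t)-Y_{2}(t)|^{2}+\int_{t}^{T}|Z_{1}(t,s)-Z_{2}(t,s)|^{2}ds\Bigr\}\leq CE\Bigl(\int_{t}^{T}L(t,s)\bigl(|y_{1}(s)-y_{2}(s)|+|z_{1}(s,t)-z_{2}(s,t)|\bigr)ds\Bigr)^{2}.
\end{equation*}
Integrating in $t\in[S,T]$, applying H\"older's inequality with exponent $2+\epsilon$ to the $L(t,s)$ factor, and using the symmetry of the $z_{i}$ together with Fubini to convert occurrences of $|z_{i}(s,t)|$ back into the $^{*}\mathcal{H}^{2}$-norm, one arrives at a bound of the form
\begin{equation*}
\|(Y_{1}-Y_{2},Z_{1}-Z_{2})\|_{^{*}\mathcal{H}^{2}[S,T]}^{2}\leq C\eta(T-S)\|(y_{1}-y_{2},z_{1}-z_{2})\|_{^{*}\mathcal{H}^{2}[S,T]}^{2},
\end{equation*}
with $\eta(T-S)\to 0$ as $T-S\to 0$. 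The Banach fixed-point theorem then yields a unique S-solution on any sufficiently short final interval $[S,T]$.

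Finally, to reach arbitrary $S\in[0,T]$ (in particular $S=0$), I would partition $[0,T]$ into finitely many slabs of contraction length and extend the local S-solution backward one slab at a time. When gluing a new slab $[S_{k+1},S_{k}]$ to the solution already built on $[S_{k},T]$, the portion of $Z$ on the off-diagonal rectangle $[S_{k+1},S_{k}]\times[S_{k},T]$ is supplied by Lemma 2.2 applied with $R=S_{k+1}$, $S=S_{k}$, and the symmetric rectangle is fixed by $Z(t,s)=Z(s,t)$. Summing the local bound (7) over the slabs yields (13). The genuinely delicate point is the contraction step: symmetry forces $z(s,t)$ into the region $\Delta$ where (H2) only provides control through $L(t,s)$ on $\Delta^{c}$, so a naive bound fails, and it is the integrability $\sup_{t}\int_{t}^{T}L(t,s)^{2+\epsilon}ds<\infty$ in (H2), exploited via H\"older, that absorbs this mismatch and delivers the decay factor $\eta(T-S)$.
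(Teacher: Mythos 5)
Your proposal is correct and follows essentially the same route as the paper: a Banach fixed point on the symmetric subspace of $^{*}\mathcal{H}^{2}[S,T]$ using the parameterized BSDE family (Proposition 2.1 / Lemma 2.1) with the frozen $z(s,t)$ converted to $z(t,s)$ by symmetry in the contraction estimate, followed by backward extension in slabs with Lemma 2.2 supplying $Z$ on the off-diagonal rectangle and the symmetry plus Fubini handling its mirror image. The only difference is one of detail, not of method: the paper's derivation of estimate (13) requires an explicit bootstrapping through the intermediate terminal values $\Psi^{T_i}$ and the a priori bound (21), which your "sum the local bounds" step compresses but does not contradict.
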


\begin{proof}
We split the proof into two steps.

\textbf{Step1} Here we consider the existence and uniqueness of the
adapted S-solution of (1) on $[S,T]$ for some $S\in [0,T].$ For all
$S\in [0,T],$ let $\mathcal {S}^2[S,T]$ be the space of all
$(y(\cdot
),z(\cdot ,\cdot ))\in {^{*}\mathcal {H}^2[S,T]}$ such that $$z(t,s)=z(s,t), a.s., \quad  t, %
s\in [S,T],  a.e..$$  Clearly, $\mathcal {S}^2[S,T]$ is a nontrivial
closed subspace of $^{*}\mathcal {H}^2[S,T]$. In fact, we assume
that there is a
series of elements $(y_n(\cdot ),z_n(\cdot ,\cdot ))$ in $\mathcal {S}%
^2[S,T],$ and the limit is $(y(\cdot ),z(\cdot ,\cdot )),$ which
belongs to $^{*}\mathcal {H}^2[S,T].$ We easily know that the limit
also belongs to $\mathcal {S}^2[S,T]$. Actually, we have the
following:
\begin{eqnarray}
&&E\int_S^T\int_S^T|z(t,s)-z(s,t)|^2dsdt \nonumber \\
&\leq
&E\int_S^T\int_S^T|z(t,s)-z_n(t,s)|^2dsdt\nonumber \\
&&+ E\int_S^T%
\int_S^T|z_n(s,t)-z(s,t)|^2dsdt.
\end{eqnarray}
As $n\rightarrow \infty,$ the limit of the right hand of (14) is
zero, so we have
\[
z(t,s)=z(s,t), \text{a.s.}, \quad t,s\in [S,T],\text{a.e.}.
\]
Note that here the space $\mathcal{S}^2[0,T]$ is isomorphic to
$\mathcal{H}_1^2[0,T]$ defined previously, i.e., there exists a
bijection between them. For any $(y(\cdot ),z(\cdot ,\cdot ))\in
\mathcal {S}^2[S,T]$, we have
\begin{eqnarray*}
&&E\left\{ \int_S^T|y(t)|^2dt+\int_S^T\int_t^T|z(t,s)|^2dsdt\right\} \\
&\leq &E\left\{
\int_S^T|y(t)|^2dt+\int_S^T\int_S^T|z(t,s)|^2dsdt\right\}\\
&=&E\left\{
\int_S^T|y(t)|^2dt+\int_S^T\int_S^t|z(t,s)|^2dsdt+\int_S^T%
\int_t^T|z(t,s)|^2dsdt\right\} \\
&=&E\left\{
\int_S^T|y(t)|^2dt+\int_S^T\int_s^T|z(t,s)|^2dtds+\int_S^T%
\int_t^T|z(t,s)|^2dsdt\right\} \\
 &=&E\left\{
\int_S^T|y(t)|^2dt+\int_S^T\int_s^T|z(s,t)|^2dtds+\int_S^T%
\int_t^T|z(t,s)|^2dsdt\right\} \\
&=&E\left\{
\int_S^T|y(t)|^2dt+\int_S^T\int_t^T|z(t,s)|^2dsdt+\int_S^T%
\int_t^T|z(t,s)|^2dsdt\right\} \\
&\leq &2E\left\{
\int_S^T|y(t)|^2dt+\int_S^T\int_t^T|z(t,s)|^2dsdt\right\}.
\end{eqnarray*}
Hence, we can take a new norm for the elements of ${\mathcal
{S}}^2[S,T]$ as follows:
\[
\Vert (y(\cdot ),z(\cdot ,\cdot ))\Vert _{\mathcal {S}^2[S,T]}\equiv
E\left\{ \int_S^T|y(t)|^2dt+\int_S^T\int_t^T|z(t,s)|^2dsdt\right\}
^{\frac 12}.
\]
Now we consider the following equation: $ t\in [S,T],$
\begin{equation}
Y(t)=\Psi
(t)+\int_t^Tg(t,s,y(s),Z(t,s),z(s,t))ds-\int_t^TZ(t,s)dW(s),
\end{equation}
for any $\Psi (\cdot )\in L_{\mathcal {F}_T}^2[S,T]$ and $(y(\cdot
),z(\cdot ,\cdot ))\in \mathcal {S}^2[S,T]$. By Lemma 2.5, (15)
admits a unique adapted S-solution $(Y(\cdot ),Z(\cdot ,\cdot ))\in
\mathcal {S}^2[S,T]$ and we can define a mapping $\Theta :{\mathcal
{S}}^2[S,T]\rightarrow \mathcal {S}^2[S,T]$ by
\[
\Theta (y(\cdot ),z(\cdot ,\cdot ))=(Y(\cdot ),Z(\cdot ,\cdot
)),\quad \forall (y(\cdot ),z(\cdot ,\cdot ))\in \mathcal
{S}^2[S,T].
\]
Next we will prove $\Theta $ defined above is contracted when
$T-S>0$ is small enough. Let $(\overline{y}(\cdot
),\overline{z}(\cdot ,\cdot ))\in \mathcal {S}^2[S,T]$ and $\Theta
(\overline{y}(\cdot ),\overline{z}(\cdot ,\cdot
))=(\overline{Y}(\cdot ),\overline{Z}(\cdot ,\cdot )).$ From (8) we
know that
\begin{eqnarray}
&&E|Y(t)-\overline{Y}(t)|^2+E\int_t^T|Z(t,s)-\overline{Z}(t,s)|^2ds
\nonumber \\
&\leq &CE\left( \int_t^T|g(t,s,y(s),Z(t,s),z(s,t))-g(t,s,\overline{y}%
(s),Z(t,s),\overline{z}(s,t))|ds\right) ^2  \nonumber \\
&\leq &CE\left\{ \int_t^TL(t,s)(|y(s)-\overline{y}(s)|+|z(t,s)-\overline{z}%
(t,s)|)ds\right\} ^2  \nonumber \\
&\leq &C(T-t)^{\frac \epsilon
{2+\epsilon }}\sup_{t\in [0,T]}\left( \int_t^TL(t,s)^{2+\epsilon
}ds\right) ^{\frac 2{2+\epsilon }}E\left\{
\int_t^T|y(s)-\overline{y}(s)|^2dt \right\}\nonumber \\
&&+C(T-t)^{\frac \epsilon {2+\epsilon }}\sup_{t\in [0,T]}\left(
\int_t^TL(t,s)^{2+\epsilon }ds\right) ^{\frac 2{2+\epsilon
}}E\left\{ \int_t^T|z(t,s)-\overline{z}(t,s)|^2ds  \right\}.
\nonumber
\\
\end{eqnarray}
The second inequality in (16) holds because of $z(t,s)\equiv
z(s,t).$ Consequently,
\begin{align*}
&\left\| (Y(\cdot ),Z(\cdot ,\cdot ))-(\overline{Y}(\cdot ),\overline{Z}%
(\cdot ,\cdot ))\right\| _{\mathcal {S}^2[S,T]}^2 \\
\equiv &E\left\{ \int_S^T|Y(t)-\overline{Y}(t)|^2dt+\int_S^T\int_t^T|Z(t,s)-%
\overline{Z}(t,s)|^2dsdt\right\} \\
\leq &C(T-S)^{\frac {\epsilon} {2+\epsilon }+1}\sup_{t\in
[0,T]}\left( \int_t^TL(t,s)^{2+\epsilon }ds\right) ^{\frac
2{2+\epsilon }}E\left\{
\int_S^T|y(t)-\overline{y}(t)|^2dt \right\}\nonumber \\
&+C(T-S)^{\frac \epsilon {2+\epsilon }}\sup_{t\in [0,T]}\left(
\int_t^TL(t,s)^{2+\epsilon }ds\right) ^{\frac 2{2+\epsilon
}}E\left\{\int_S^T\int_t^T|z(t,s)-\overline{z}(t,s)|^2dsdt \right\}.
\nonumber
\end{align*}
Then we can choose $\eta$ so that $C' \max\{\eta ^{\frac \epsilon
{2+\epsilon }}, \eta ^{\frac \epsilon {2+\epsilon }+1}\} =\frac 12,$
where $$C'=C\sup_{t\in [0,T]}\left( \int_t^TL(t,s)^{2+\epsilon
}ds\right) ^{\frac 2{2+\epsilon }}.$$ Hence (15) admits a unique
fixed point $(Y(\cdot ),Z(\cdot ,\cdot ))\in \mathcal {S}^2[S,T]$
which is the unique adapted S-solution of (1) in $[S,T]$ if $T-S\leq
\eta$.

{\bf Step 2:} Now we will prove the existence and uniqueness of the
S-solution of (1) for $(t,s)\in [R,S]\times [R,S],$ for some $%
R\in [0,S].$ First we consider the following equation: $ t\in
[R,S],$
\begin{equation}
Y(t)=\Psi^S(t)+\int_t^Sg(t,s,Y(s),Z(t,s),Z(s,t))ds-\int_t^SZ(t,s)dW(s),
\end{equation}
where
\begin{equation}
\Psi^S(t)=\Psi(t)+\int_S^Tg(t,s,Y(s),Z(t,s),Z(s,t))ds-\int_S^TZ(t,s)dW(s).
\end{equation}
If we prove that $\Psi ^S(t)$ is $\mathcal {F}_S$-measurable and
$\Psi ^S(t)\in L_{\mathcal {F}_S}^2[R,S]$, then we can use the same
argument as Step 1 to show
(1) is solvable on $[R,S]$ when $\eta\geq S-R>0$ is small enough. Let $%
Z(t,s)\equiv Z(s,t) $ in (18), and we denote $$%
h(t,s,Y(s),Z(t,s))\equiv g(t,s,Y(s),Z(t,s),Z(s,t)).$$ From Step 1 we
have known that
$\{Y(s); s\in [S,T]\}$ is solved, then by Lemma 2.6, (18) admits a unique adapted solution $%
(\Psi ^S(\cdot ),Z(\cdot ,\cdot ))\in L_{\mathcal {F}_S}^2[R,S]\times L^2(R,S;\break L_{%
{\Bbb F}}^2[S,T]),$ and
\begin{eqnarray}
&&E\left[ \int_R^S|\Psi
^S(t)|^2dt+\int_R^S\int_S^T|Z(t,s)|^2dsdt\right] \nonumber \\
&\leq &CE\int_R^S|\Psi (t)|^2dt+CE\int_R^S\left(
\int_S^T|h(t,s,Y(s),0)|ds\right) ^2dt  \nonumber\\
&=&CE\int_R^S|\Psi (t)|^2dt+CE\int_R^S\left(
\int_S^T|g(t,s,Y(s),0,0)|ds\right) ^2dt  \nonumber \\
&\leq &CE\int_R^S|\Psi (t)|^2dt+CE\int_R^S\left(
\int_S^T|g_0(t,s)|ds\right)
^2dt  \nonumber \\
&&+CE\int_R^S\left( \int_S^TL(t,s)|Y(s)|ds\right) ^2dt.
\end{eqnarray}
Here $C$ is a constant depending on $\sup\limits_{t\in
[0,T]}\int_t^TL(t,s)^{2+\epsilon }ds$ and $T.$ Then we have
$E\int_R^S|\Psi ^S(t)|^2dt <\infty.$ Thus we can repeat the argument
in Step 2 to finish the proof of the existence and uniqueness of
adapted S-solution of (1) on $[0,T]$.

Next we prove the estimate in the theorem. First we can choose
$T_1\in [0,T]$ so that it satisfies $\max \left\{ 8\theta ^{\frac
\epsilon {2+\epsilon }}A,4\theta ^{\frac \epsilon {2+\epsilon
}+1}A\right\} =\frac 12$, where
\[
\theta =T-T_1,\text{  }  \text{  } A=\sup\limits_{t\in [0,T]}\left(
\int_t^TL^{2+\epsilon }(t,s)ds\right) ^{\frac 2{2+\epsilon }},
\]
then from BSVIE (1) we have, $\forall u\in [T_1,T],$
\begin{eqnarray*}
&&E\int_u^T|Y(t)|^2dt+E\int_u^T\int_t^T|Z(t,s)|^2dsdt \\
&\leq& 2E\int_u^T|\Psi (t)|^2dt+4E\int_u^T\left(
\int_t^T|g_0(t,s)|ds\right) ^2dt \nonumber \\
&&+ 4E\int_u^T\left(
\int_t^T|g(t,s,Y(s),Z(t,s),Z(s,t))-g_0(t,s)|ds\right) ^2dt \nonumber
\\
 &\leq& 2E\int_u^T|\Psi (t)|^2dt+4E\int_u^T\left(
\int_t^T|g_0(t,s)|ds\right) ^2dt \nonumber \\
&&+ 4(T-T_1)^{\frac \epsilon {2+\epsilon }+1}\sup\limits_{t\in
[0,T]}\left( \int_t^TL^{2+\epsilon }(t,s)ds\right) ^{\frac
2{2+\epsilon }}E\int_u^T|Y(s)|^2ds  \nonumber \\
&&+ 8(T-T_1)^{\frac \epsilon {2+\epsilon }}\sup\limits_{t\in
[0,T]}\left( \int_t^TL^{2+\epsilon }(t,s)ds\right) ^{\frac
2{2+\epsilon }}E\int_u^T\int_t^T|Z(t,s)|^2dsdt, \nonumber
\end{eqnarray*}
thus by the way of choosing $T_{1}$ we know that
\begin{eqnarray}
&&\ E\int_u^T|Y(t)|^2dt+E\int_u^T\int_t^T|Z(t,s)|^2dsdt  \nonumber \\
&\leq &4E\int_u^T|\Psi (t)|^2dt+8E\int_u^T\left(
\int_t^T|g_0(t,s)|ds\right) ^2dt,
\end{eqnarray}
furthermore we have, $\forall u\in [T_1,T],$
\begin{eqnarray}
&&\ E\int_u^T|Y(t)|^2dt+E\int_u^T\int_u^T|Z(t,s)|^2dsdt   \nonumber \\
&=&E\int_u^T|Y(t)|^2dt+2E\int_u^T\int_t^T|Z(t,s)|^2dsdt  \nonumber \\
&\leq &CE\int_u^T|\Psi (t)|^2dt+CE\int_u^T\left(
\int_t^T|g_0(t,s)|ds\right) ^2dt.
\end{eqnarray}
Similarly we can choose $T_2\in [0,T_1]$ satisfying $T_1-T_2=\theta
=T-T_1,$ so that $u\in [T_2,T_1].$
\begin{eqnarray}
&&E\left\{
\int_u^{T_1}|Y(t)|^2dt+\int_u^{T_1}\int_u^{T_1}|Z(t,s)|^2dsdt\right\}\nonumber
\\
&\leq &CE\left\{ \int_u^{T_1}|\Psi ^{T_1}(t)|^2dt+\int_u^{T_1}\left(
\int_t^{T_1}|g_0(t,s)|ds\right) ^2dt\right\} ,
\end{eqnarray}
where
\begin{eqnarray}
\Psi ^{T_1}(t)=\Psi
(t)+\int_{T_1}^Tg(t,s,Y(s),Z(t,s),Z(s,t))ds-\int_{T_1}^TZ(t,s)dW(s).
\end{eqnarray}
By the definition of S-solution we have $Z(t,s)\equiv Z(s,t)$ in
(23),
then from the proof in Step 2 above we have, $\forall u\in [T_2,T_1],$%
\begin{eqnarray}
&&\ E\left[ \int_u^{T_1}|\Psi
^{T_1}(t)|^2dt+\int_u^{T_1}\int_{T_1}^T|Z(t,s)|^2dsdt\right]  \nonumber \\
&\leq &CE\int_u^{T_1}|\Psi (t)|^2dt+CE\int_u^{T_1}\left(
\int_{T_1}^T|g_0(t,s)|ds\right) ^2dt  \nonumber \\
&&+CE\int_u^{T_1}\left( \int_{T_1}^TL(t,s)|Y(s)|ds\right) ^2dt
\nonumber \\
 &\leq& CE\int_u^T|\Psi (t)|^2dt+CE\int_u^T\left(
\int_t^T|g_0(t,s)|ds\right) ^2dt,
\end{eqnarray}
then from (22) and (24), $\forall u\in [T_2,T_1],$
\begin{eqnarray}
&&\ E\left\{
\int_u^{T_1}|Y(t)|^2dt+\int_u^{T_1}\int_u^{T_1}|Z(t,s)|^2dsdt\right\}
\nonumber
 \\
\ &\leq &CE\int_u^T|\Psi (t)|^2dt+CE\int_u^T\left(
\int_t^T|g_0(t,s)|ds\right) ^2dt.
\end{eqnarray}
Here $C$ depends on $\sup\limits_{t\in
[0,T]}\int_t^TL(t,s)^{2+\epsilon }ds$. Since we are considering the
symmetrical form of $Z(\cdot ,\cdot )$, by the stochastic Fubini
Theorem we get:
\begin{eqnarray}
&&E\int_u^{T_1}\int_{T_1}^T|Z(t,s)|^2dsdt  \nonumber \\
&=&E\int_{T_1}^T\int_u^{T_1}|Z(t,s)|^2dtds  \nonumber \\
\ &=&E\int_{T_1}^T\int_u^{T_1}|Z(s,t)|^2dtds  \nonumber \\
\ &=&E\int_{T_1}^T\int_u^{T_1}|Z(t,s)|^2dsdt.
\end{eqnarray}
From (21), (24), (25) and (26), we can estimate that, $\forall u\in
[T_2,T_1],$
\begin{eqnarray*}
&&\ E\left\{ \int_u^T|Y(t)|^2dt+\int_u^T\int_u^T|Z(t,s)|^2dsdt\right\} \\
\ &=&E\left\{
\int_{T_1}^T|Y(t)|^2dt+\int_{T_1}^T\int_{T_1}^T|Z(t,s)|^2dsdt\right\} \\
&&+E\left\{
\int_u^{T_1}|Y(t)|^2dt+\int_u^{T_1}\int_u^{T_1}|Z(t,s)|^2dsdt\right\} \\
&&+E\int_{T_1}^T\int_u^{T_1}|Z(t,s)|^2dsdt+E\int_u^{T_1}%
\int_{T_1}^T|Z(t,s)|^2dsdt \\
\ &\leq &C\left\{ E\int_u^T|\Psi (t)|^2dt+CE\int_u^T\left(
\int_t^T|g_0(t,s)|ds\right) ^2dt\right\} .
\end{eqnarray*}
Thus we can repeat the argument above to obtain the estimate.
\end{proof}

\subsection{Some corollaries for S-solutions}
In this subsection we give some corollaries. Similar to \cite{Y2},
we easily claim the following results. We omit their proof.

\begin{corollary}
Let $\overline{g}:\Delta ^c\times R^m\times R^{m\times d}\times
R^{m\times
d}\times \Omega \rightarrow R^m$ also satisfies (H2). Let $\overline{\Psi }%
(\cdot )\in L_{\mathcal{F}_T}^2[0,T]$ and $(\overline{Y}(\cdot ),\overline{Z}%
(\cdot ,\cdot ))\in ^{*}\mathcal{H}^2[0,T]$ be the adapted
S-solution of
(1) with $g$ and $\Psi (\cdot )$ replaced by $\overline{g}$ and $\overline{%
\Psi }(\cdot )$, respectively, then we have the following: $\forall
S\in [0,T],$
\begin{equation}
\ E\int_S^T|Y(t)-\overline{Y}(t)|^2dt+E\int_S^T\int_S^T|Z(t,s)-\overline{Z}%
(t,s)|^2dsdt  \nonumber
\end{equation}
\[
\ \leq CE\int_S^T|\Psi (t)-\overline{\Psi }(t)|^2dt+
CE\int_S^T\left( \int_t^T|g-\overline{g}%
|ds\right) ^2dt,
\]
where $g=g(t,s,Y(s),Z(t,s),Z(s,t))$ and
$\overline{g}=\overline{g}(t,s,Y(s),Z(t,s),Z(s,t)).$
\end{corollary}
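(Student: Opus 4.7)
The plan is to recycle the a priori estimate established in Theorem 3.1, now applied to the difference $(\Delta Y,\Delta Z):=(Y-\overline{Y},Z-\overline{Z})$. Subtracting the two BSVIEs yields, for $t\in[0,T]$,
\[
\Delta Y(t)=(\Psi-\overline{\Psi})(t)+\int_t^T\bigl[g(t,s,Y,Z(t,s),Z(s,t))-\overline{g}(t,s,\overline{Y},\overline{Z}(t,s),\overline{Z}(s,t))\bigr]ds-\int_t^T\Delta Z(t,s)dW(s).
\]
I would split the bracket as $[g-\overline{g}](t,s,Y(s),Z(t,s),Z(s,t))$ plus $\overline{g}(t,s,Y,Z(t,s),Z(s,t))-\overline{g}(t,s,\overline{Y},\overline{Z}(t,s),\overline{Z}(s,t))$; the first piece is the inhomogeneity that will sit on the right-hand side of the claim, while the second piece is Lipschitz in $(\Delta Y(s),\Delta Z(t,s),\Delta Z(s,t))$ with coefficient $L(t,s)$, by (H2). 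Since both $Z$ and $\overline{Z}$ are symmetric, so is $\Delta Z$, so $(\Delta Y,\Delta Z)$ lies in the same space $\mathcal{S}^2[0,T]$ used in Theorem 3.1.

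From here the argument proceeds exactly as in Steps 1--2 of the a priori bound in Theorem 3.1. First, pick $T_1<T$ so that $\theta=T-T_1$ is small enough to make $C\max\{\theta^{\epsilon/(2+\epsilon)},\theta^{\epsilon/(2+\epsilon)+1}\}\le 1/2$, where $C$ depends only on $\sup_t\int_t^T L^{2+\epsilon}(t,s)ds$. Applying Lemma 2.5 on $[T_1,T]$ and absorbing the Lipschitz $\Delta Y,\Delta Z$ terms into the left-hand side---the computation that produced (20)--(21)---delivers the claimed bound on $[T_1,T]$. Next, on $[T_2,T_1]$ with $T_1-T_2=\theta$, I would isolate an intermediate ``terminal'' $\Delta\Psi^{T_1}(t)$ built analogously to (18) from the differences, use Lemma 2.6 together with the already-established estimate on $[T_1,T]$ and the a priori bound (13) to control $E\int_{T_2}^{T_1}|\Delta\Psi^{T_1}(t)|^2dt$, and then run the contraction bound on $[T_2,T_1]$. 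Finitely many such backward steps of uniform width $\theta$ cover $[0,T]$.

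The one place where real care is needed---and the main obstacle I anticipate---is reassembling the pieces into a single double integral $E\int_S^T\int_S^T|\Delta Z|^2dsdt$. When the on-diagonal bound on $[T_k,T_{k-1}]^2$ is added to the one on $[T_{k-1},T]^2$, the off-diagonal rectangles $[T_k,T_{k-1}]\times[T_{k-1},T]$ and its transpose are still missing. This is precisely the manoeuvre carried out in (26): the stochastic Fubini theorem combined with the symmetry $\Delta Z(t,s)=\Delta Z(s,t)$ identifies the two off-diagonal contributions with one another and reduces them to an on-diagonal integral already bounded in the preceding step. Everything else is a routine repetition of the Lipschitz estimates in (16) with the additional inhomogeneous term $g-\overline{g}$ built in.
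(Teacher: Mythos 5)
Your proposal is correct and follows exactly the route the paper intends: the paper omits the proof of this corollary (deferring to the analogous argument in \cite{Y2}), and the natural reconstruction is precisely what you describe --- apply the stability estimate (8) of Lemma 2.5 to the difference $(\Delta Y,\Delta Z)$, split the generator difference into the inhomogeneity $g-\overline{g}$ plus a Lipschitz remainder absorbed on small intervals, and iterate backward, using the stochastic Fubini theorem together with the symmetry $\Delta Z(t,s)=\Delta Z(s,t)$ as in (26) to recover the full double integral over $[S,T]^2$ from the triangular pieces. No gaps.
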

\begin{corollary}
Let (H2) hold and let $(Y(\cdot ),Z(\cdot ,\cdot ))\in
^{*}\mathcal{H}^2[0,T]$ be the adapted S-solution of (1). For any
$t\in [0,T]$, let $(\lambda ^t(\cdot ),\mu ^t(\cdot ))\in
\mathbb{H}^2[t,T]$ be the adapted solution of the following BSDE:
\begin{equation*}
\lambda ^t(r)=\Psi (t)+\int_r^Tg(t,s,Y(s),\mu
^t(s),Z(s,t))ds-\int_r^T\mu ^t(s)dW(s),\quad r\in [t,T]
\end{equation*}
Let
\begin{equation*}
\left\{
\begin{array}{lc}
\overline{Y}(t)=\lambda ^t(t), & t\in [0,T], \\
\overline{Z}(t,s)=\mu ^t(s), & (t,s)\in \Delta ^c.%
\end{array}
\right.
\end{equation*}
and let the values $\overline{Z}(t,s)$ of $\overline{Z}(\cdot ,\cdot )$ for $%
(t,s)\in \Delta $ be defined through:
\begin{equation*}
\overline{Z}(t,s)=\overline{Z}(s,t),\quad (t,s)\in \Delta
\end{equation*}
Then
\begin{equation*}
\left\{
\begin{array}{lc}
\overline{Y}(t)=Y(t), & t\in [0,T], \\
\overline{Z}(t,s)=Z(t,s), & (t,s)\in [0,T]^2.%
\end{array}
\right.
\end{equation*}
\end{corollary}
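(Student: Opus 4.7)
The plan is to recognize the BSDE in the statement as precisely the BSDE-extension (analogous to equation~(2) in the paper) of the original BSVIE~(1) along the parameter $t$, and then invoke uniqueness of BSDE solutions.

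First, I would check that the BSDE for $(\lambda^t, \mu^t)$ is well-posed. Since $(Y(\cdot), Z(\cdot,\cdot))$ is the given adapted S-solution, the process $Y(\cdot)$ is known on $[t,T]$, and $Z(s,t)$ for $s\in[t,T]$ lies in the region $\Delta[0,T]$ where by the S-solution property $Z(s,t)=Z(t,s)$; hence $s\mapsto Z(s,t)$ on $[t,T]$ is nothing but $Z(t,\cdot)$, which is $\mathbb{F}$-adapted and square-integrable. Defining $h^t(s,z):=g(t,s,Y(s),z,Z(s,t))$ and using (H2), one verifies that $h^t$ satisfies (H1) on $[t,T]$ with the same $L(t,s)$ and with $h^t(s,0)$ square-integrable in the required sense. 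Proposition~1 then yields the unique adapted solution $(\lambda^t,\mu^t)\in\mathbb{H}^2[t,T]$.

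Next, the crux: I would exhibit a second solution of the same BSDE using $(Y,Z)$ directly. For fixed $t\in[0,T]$, define for $r\in[t,T]$
\begin{equation*}
\widetilde\lambda^t(r):=\Psi(t)+\int_r^T g(t,s,Y(s),Z(t,s),Z(s,t))\,ds-\int_r^T Z(t,s)\,dW(s),\qquad \widetilde\mu^t(s):=Z(t,s).
\end{equation*}
Since $(Y,Z)$ is an adapted S-solution, $\widetilde\lambda^t(t)=Y(t)$ (this is BSVIE~(1) evaluated at $t$), and $(\widetilde\lambda^t(\cdot),\widetilde\mu^t(\cdot))\in\mathbb{H}^2[t,T]$. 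Moreover the driver inside the integral satisfies
\begin{equation*}
g(t,s,Y(s),Z(t,s),Z(s,t))=g(t,s,Y(s),\widetilde\mu^t(s),Z(s,t)),
\end{equation*}
so $(\widetilde\lambda^t,\widetilde\mu^t)$ solves exactly the same BSDE driven by $h^t$ as $(\lambda^t,\mu^t)$. Uniqueness from Proposition~1 forces $\lambda^t(r)=\widetilde\lambda^t(r)$ and $\mu^t(s)=Z(t,s)$ a.e. In particular, $\overline Y(t)=\lambda^t(t)=\widetilde\lambda^t(t)=Y(t)$ for every $t\in[0,T]$, and $\overline Z(t,s)=\mu^t(s)=Z(t,s)$ for all $(t,s)\in\Delta^c$. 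Finally, for $(t,s)\in\Delta$ the definition $\overline Z(t,s)=\overline Z(s,t)$ and the S-solution property $Z(t,s)=Z(s,t)$ give $\overline Z(t,s)=Z(s,t)=Z(t,s)$ as well, so equality holds on all of $[0,T]^2$.

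The only non-routine point is ensuring that the family of BSDEs in the statement is genuinely well-posed, and this rests entirely on using the symmetry $Z(s,t)=Z(t,s)$ to reinterpret the a priori $\mathcal F_{s\vee t}$-measurable input $Z(s,t)$ as an $\mathbb F$-adapted process on $[t,T]$; once that is in hand, the argument reduces to BSDE uniqueness.
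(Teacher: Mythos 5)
Your proof is correct and is precisely the intended argument: the paper omits the proof of this corollary (deferring to the analogous result in \cite{Y2}), and the standard route is exactly yours --- freeze $Y(\cdot)$ and $Z(\cdot,t)$ from the S-solution, observe that $r\mapsto\bigl(\Psi(t)+\int_r^T g(t,s,Y(s),Z(t,s),Z(s,t))\,ds-\int_r^T Z(t,s)\,dW(s),\,Z(t,\cdot)\bigr)$ is a second adapted solution of the same parameterized BSDE, and invoke BSDE uniqueness. Your two supporting observations (using the symmetry $Z(s,t)=Z(t,s)$ to see that the frozen input $s\mapsto Z(s,t)$ is $\mathbb{F}$-adapted and square-integrable so the BSDE family is well-posed, and the final reconciliation on $\Delta$ via the symmetric extension) are the right details and close the argument.
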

\begin{corollary}
Let (H2) hold, and $\Psi (\cdot )\in L_{\mathcal{F}_T}^2[0,T]$ and $%
(Y(\cdot ),Z(\cdot ,\cdot ))\in ^{*}\mathcal{H}^2[0,T]$ be the
unique adapted S-solution of BSVIE (1) on $[0,T]$, then for all
$S\in [0,T)$,
\begin{equation*}
\Psi ^S(t)=\Psi
(t)+\int_S^Tg(t,s,Y(s),Z(t,s),Z(s,t))ds-\int_S^TZ(t,s)dW(s)
\end{equation*}
is $\mathcal{F}_S$-measurable for almost all $t\in [0,S]$.
\end{corollary}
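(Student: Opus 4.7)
The plan is to read off the $\mathcal{F}_S$-measurability of $\Psi^S(t)$ directly from the defining BSVIE, by splitting every integral at $s=S$ and rearranging. No further existence or uniqueness argument is really needed, although Lemma 2.6 is available as a structural alternative.

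First I would fix $S \in [0,T)$ and an arbitrary $t \in [0,S]$. Starting from identity (1) satisfied by the adapted S-solution $(Y,Z)$, I would split $\int_t^T = \int_t^S + \int_S^T$ in both the Lebesgue and the It\^o integrals and rearrange to obtain
\[
\Psi^S(t) \;=\; Y(t) \;-\; \int_t^S g(t,s,Y(s),Z(t,s),Z(s,t))\,ds \;+\; \int_t^S Z(t,s)\,dW(s).
\]
The remaining task reduces to checking that each of the three terms on the right-hand side is $\mathcal{F}_S$-measurable.

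Next I would dispatch the three terms. The term $Y(t)$ is $\mathcal{F}_t$-measurable by adaptedness, hence $\mathcal{F}_S$-measurable. For the Lebesgue integral, for each $s \in [t,S]$ the definition of $^{*}\mathcal{H}^2$ gives that $Z(t,s)$ is $\mathcal{F}_{t \vee s} = \mathcal{F}_s$-measurable, while the S-solution constraint $Z(s,t)=Z(t,s)$ makes $Z(s,t)$ also $\mathcal{F}_s$-measurable; together with $Y(s) \in \mathcal{F}_s$, the integrand is $\mathcal{F}_s$-adapted on $[t,S]$, so the integral lies in $\mathcal{F}_S$. For the It\^o integral, $s \mapsto Z(t,s)$ is $\mathbb{F}$-adapted on $[t,S]$ by the same reasoning, so $\int_t^S Z(t,s)\,dW(s)$ is $\mathcal{F}_S$-measurable. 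Combining the three yields the claim for almost all $t \in [0,S]$.

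The only delicate point is justifying adaptedness of the integrands, which rests on two facts used in tandem: the $^{*}\mathcal{H}^2$ definition (so that $Z(t,s)$ is $\mathcal{F}_{t \vee s}$-measurable) and the S-solution symmetry $Z(t,s)=Z(s,t)$. Integrability, required for the It\^o and Lebesgue integrals to be bona fide, follows from (H2) together with $(Y,Z) \in {^{*}\mathcal{H}^2[0,T]}$. As a more structural route---mirroring Step 2 in the proof of Theorem 3.1---one could freeze $Y(s)$ and $Z(s,t)$ on $[S,T]$, set $h(t,s,z) := g(t,s,Y(s),z,Z(s,t))$, and apply Lemma 2.6 to the resulting BSDE in $t \in [0,S]$; the uniqueness part of that lemma then delivers $\Psi^S(\cdot) \in L^2_{\mathcal{F}_S}[0,S]$ directly.
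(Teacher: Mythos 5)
Your proposal is correct. Note that the paper itself does not write out a proof of this corollary (it is stated in Subsection 3.2 with the remark that the proofs are omitted and are similar to Yong's arguments); the machinery the paper actually has on hand for this fact is Lemma 2.6 together with Step 2 of the proof of Theorem 3.1, where $\Psi^S(t)=\lambda(t,S)$ arises as the time-$S$ value of a family of BSDEs on $[S,T]$ parameterized by $t$, so that $\mathcal{F}_S$-measurability comes from adaptedness of BSDE solutions. Your primary route is different and more elementary: you rearrange the already-solved equation (1) into $\Psi^S(t)=Y(t)-\int_t^S g\,ds+\int_t^S Z(t,s)\,dW(s)$ and check that each term is $\mathcal{F}_S$-measurable, using exactly the two ingredients that matter here --- the $\mathcal{F}_{t\vee s}$-measurability built into $^{*}\mathcal{H}^2$ and the symmetry $Z(s,t)=Z(t,s)$ to handle the third argument of $g$ on $\Delta$. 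This buys a proof that needs nothing beyond the hypothesis that an S-solution is already given, whereas the Lemma 2.6 route (which you also correctly identify as the structural alternative, and which additionally yields $\Psi^S(\cdot)\in L^2_{\mathcal{F}_S}[0,S]$ with the estimate (11)) is the one the paper needs anyway inside the existence proof, where the solution on $[0,S]$ is not yet available. Both arguments are sound; your direct one is the cleaner proof of the corollary as stated.
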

Next we will give an estimate to the S-solution of (1) which is
stronger than (13). We assume (1) admits a unique S-solution
$(Y(\cdot),Z(\cdot,\cdot)\in^{*}\mathcal {H}^2[0,T]$, and we will
estimate: $E\left( |Y(t)|^2+\int_t^T|Z(t,s)|^2ds\right).$ The author
gave such an estimate of M-solution in \cite{Y2}, but there is a
term $E\int_t^T|Z(s,t)|^2ds$ that can not be estimated directly so
he introduced the Malliavin calculus to treat this problem. But here
we don't have this kind of problem. In the following we assume
$(Y(\cdot),Z(\cdot,\cdot)$ is S-solution of (1). From (1) we have:
\begin{equation*}
Y(t)+\int_t^TZ(t,s)dW(s)=\Psi
(t)+\int_t^Tg(t,s,Y(s),Z(t,s),Z(s,t))ds,
\end{equation*}
thus
\begin{eqnarray*}
&&E\left\{ |Y(t)|^2+\int_t^T|Z(t,s)|^2ds\right\}   \nonumber
\\
&\leq& 2E|\Psi (t)|^2+4E\left( \int_t^T|g_0(t,s)|ds\right) ^2
\\
&&+4E\left( \int_t^T|g(t,s,Y(s),Z(t,s),Z(s,t))-g_0(t,s)|ds\right) ^2
\\
&\leq& 2E|\Psi (t)|^2+4E\left( \int_t^T|g_0(t,s)|ds\right) ^2
\\
&&+4(T-t)^{\frac \epsilon {2+\epsilon }}\sup\limits_{t\in
[0,T]}\left( \int_t^TL(t,s)^{2+\epsilon }ds\right) ^{\frac
2{2+\epsilon }}E\int_t^T|Y(s)|^2ds
\\
&&+8(T-t)^{\frac \epsilon {2+\epsilon }}\sup\limits_{t\in
[0,T]}\left( \int_t^TL(t,s)^{2+\epsilon }ds\right) ^{\frac
2{2+\epsilon }}E\int_t^T|Z(t,s)|^2ds.
\end{eqnarray*}
So there exists a constant $\eta =T-T_1,$ such that $\forall t\in
[T_1,T],$
\begin{eqnarray*}
&&4(T-t)^{\frac \epsilon {2+\epsilon }}\sup\limits_{t\in
[0,T]}\left(
\int_t^TL(t,s)^{2+\epsilon }ds\right) ^{\frac 2{2+\epsilon }} \\
&\leq &4\eta^{\frac \epsilon {2+\epsilon }}\sup\limits_{t\in
[0,T]}\left( \int_t^TL(t,s)^{2+\epsilon }ds\right) ^{\frac
2{2+\epsilon }}=\frac 13.
\end{eqnarray*}
Then we have, $\forall t\in [T_1,T],$
\begin{eqnarray}
&&E|Y(t)|^2 \leq 2E|\Psi (t)|^2+4E\left( \int_t^T|g_0(t,s)|ds\right)
^2+\frac 13E\int_t^T|Y(s)|^2ds
\end{eqnarray}
and
\begin{eqnarray}
&&E\int_t^T|Z(t,s)|^2ds  \nonumber \\
&\leq& 6E|\Psi (t)|^2+12E\left( \int_t^T|g_0(t,s)|ds\right)
^2+E\int_t^T|Y(s)|^2ds,\nonumber \\
\end{eqnarray}
thus from (27) and (28), we have:
\begin{eqnarray}
&&E|Y(t)|^2+E\int_t^T|Z(t,s)|^2ds \nonumber \\
&\leq &8E|\Psi (t)|^2+16E\left( \int_t^T|g_0(t,s)|ds\right) ^2+\frac
43E\int_t^T|Y(s)|^2ds  \notag \\
&\leq &8E|\Psi (t)|^2+\frac 43CE\int_t^T|\Psi (s)|^2ds  \notag \\
&&+16E\left( \int_t^T|g_0(t,s)|ds\right) ^2+\frac 43CE\int_t^T\left(
\int_s^T|g_0(s,u)|du\right) ^2ds.
\end{eqnarray}
The second inequality in (29) holds because of (13). $T$ is a finite
constant, from the method of choice $T_1$, $T_1$ depends only on
$\sup\limits_{t\in [0,T]}\left( \int_t^TL(t,s)^{2+\epsilon
}ds\right) ^{\frac 2{2+\epsilon }}$, thus there must exist finite
partition on $[0,T]$, $T=T_0\geq
T_1\geq T_2\geq \cdots \geq T_k=0$, such that $T_i-T_{i+1}\leq \eta ,$ and $%
\forall t\in [T_{i+1},T_i]$ $(i=0,1,2,\cdots k-1),$ we have the
following:
\begin{eqnarray}
&&E|Y(t)|^2+E\int_t^{T_i}|Z(t,s)|^2ds \nonumber \\
 &\leq& 8E|\Psi
^{T_i}(t)|^2+16E\left( \int_t^{T_i}|g_0(t,s)|ds\right) ^2+\frac
43E\int_t^{T_i}|Y(s)|^2ds
\nonumber \\
 &\leq& 8E|\Psi ^{T_i}(t)|^2+16E\left(
\int_t^{T_i}|g_0(t,s)|ds\right) ^2+\frac 43E\int_t^T|Y(s)|^2ds
\nonumber
\end{eqnarray}
\begin{eqnarray}
&\leq& 8E|\Psi ^{T_i}(t)|^2+\frac 43CE\int_t^T|\Psi (s)|^2ds
\nonumber
\\
&&+16E\left( \int_t^{T_i}|g_0(t,s)|ds\right) ^2+\frac
43CE\int_t^T\left( \int_s^T|g_0(s,u)|du\right) ^2ds.
\end{eqnarray}
Here \begin{equation*} Y(t)=\Psi
^{T_i}(t)+\int_t^{T_i}g(t,s,Y(s),Z(t,s),Z(s,t))ds-\int_t^{T_i}Z(t,s)dW(s),
\end{equation*}
where
\begin{equation}
\Psi ^{T_i}(t)=\Psi
(t)+\int_{T_i}^Tg(t,s,Y(s),Z(t,s),Z(s,t))ds-\int_{T_i}^TZ(t,s)dW(s).
\end{equation}
Now we will give estimates for $E|\Psi ^{T_i}(t)|^2$ and $%
E\int_{T_i}^T|Z(t,s)|^2ds, \quad t\in [T_{i+1},T_i],\quad
i=1,2,\cdots k-1.$ From Lemma 2.6 we have, $\forall t\in
[T_{i+1},T_i],$
\begin{eqnarray}
&&E|\Psi ^{T_i}(t)|^2+E\int_{T_i}^T|Z(t,s)|^2ds \nonumber \\
&\leq &CE|\Psi (t)|^2+CE\left(
\int_{T_i}^T|g(t,s,Y(s),0,0)|ds\right) ^2
\notag \\
&\leq &CE|\Psi (t)|^2+CE\left( \int_{T_i}^T|g(t,s,0,0,0)|ds\right)
^2  \notag\\
&&+CE\left( \int_{T_i}^TL(t,s)|Y(s)|ds\right) ^2  \notag \\
&\leq &CE|\Psi (t)|^2+CE\left( \int_t^T|g_0(t,s)|ds\right)
^2+CE\int_t^T|\Psi (s)|^2ds  \notag \\
&&+C\int_t^T\left( \int_s^T|g_0(s,u)|du\right) ^2ds.
\end{eqnarray}
Hence we have for any $t\in [0,T],$ there must exist one $i$, such
that $t\in [T_{i+1},T_i]$. From (30) and (32), we have,
\begin{eqnarray}
&&E|Y(t)|^2+E\int_t^T|Z(t,s)|^2ds \notag \\
&=&E|Y(t)|^2+E\int_t^{T_i}|Z(t,s)|^2ds+E\int_{T_i}^T|Z(t,s)|^2ds
\notag\\
 &\leq &l_1E\left( \int_t^T|g_0(t,s)|ds\right) ^2+l_2E|\Psi
(t)|^2
\notag\\
 &&+l_3E\int_t^T|\Psi (s)|^2ds+l_4E\int_t^T\left(
\int_s^T|g_0(s,u)|du\right) ^2ds \notag,
\end{eqnarray}
where $l_1,$ $l_2,$ $l_3,$ $l_4$ depend on $T$ and
$\sup\limits_{t\in [0,T]}\int_t^TL(t,s)^{2+\epsilon }ds.$

To sum up the argument above, we give the estimate as follows:
\begin{corollary}
Let $(Y(\cdot ),Z(\cdot ,\cdot ))$ be the S-solution of (1). Assume
\begin{equation*}
E\int_0^T\left( \int_s^T|g_0(s,u)|du\right) ^2ds<\infty ,\quad
E\int_0^T|\Psi (s)|^2ds<\infty ,
\end{equation*}
and for any $t\in[0,T]$, $E|\Psi (t)|^2<\infty,$ then we have
$\forall t\in [0,T],$
\begin{eqnarray}
&&\ E|Y(t)|^2+E\int_t^T|Z(t,s)|^2ds \notag \\
&\leq &l_1E\left( \int_t^T|g_0(t,s)|ds\right) ^2+l_2E|\Psi (t)|^2  \notag \\
&&+l_3E\int_t^T|\Psi (s)|^2ds+l_4E\int_t^T\left(
\int_s^T|g_0(s,u)|du\right) ^2ds,
\end{eqnarray}
where $l_1,$ $l_2,$ $l_3,$ $l_4$ depend on $T$ and
$\sup\limits_{t\in [0,T]}\int_t^TL(t,s)^{2+\epsilon }ds.$
\end{corollary}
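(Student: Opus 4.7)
The plan is to run the same backward-in-time splicing argument that the preceding computation (equations (27)--(32)) foreshadows, but to present it cleanly as a local-then-patch estimate. First I would rewrite BSVIE (1) at a single time $t$ in the It\^o-isometric form
\[
E|Y(t)|^2 + E\int_t^T|Z(t,s)|^2 ds = E\Bigl|\Psi(t)+\int_t^T g(t,s,Y(s),Z(t,s),Z(s,t))ds\Bigr|^2,
\]
bound the RHS by $2E|\Psi(t)|^2+4E(\int_t^T|g_0(t,s)|ds)^2$ plus a Lipschitz remainder, and then apply H\"older with exponents $(2+\epsilon)/\epsilon$ and $(2+\epsilon)/2$ to get a remainder of the form $C(T-t)^{\epsilon/(2+\epsilon)}A\bigl[E\int_t^T|Y(s)|^2ds+E\int_t^T|Z(t,s)|^2ds+E\int_t^T|Z(s,t)|^2ds\bigr]$, with $A=\sup_t(\int_t^T L^{2+\epsilon}ds)^{2/(2+\epsilon)}$.

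The crucial observation—and the reason the S-solution framework permits a direct estimate, whereas Yong's M-solution argument in \cite{Y2} needs Malliavin calculus—is that the symmetry $Z(s,t)\equiv Z(t,s)$ converts the unwanted term $E\int_t^T|Z(s,t)|^2 ds$ into $E\int_t^T|Z(t,s)|^2 ds$, which already sits on the LHS. Thus I can choose $\eta=T-T_1$ so small that $4\eta^{\epsilon/(2+\epsilon)}A\leq 1/3$; absorbing the $Z$-term yields (27)--(28) on $[T_1,T]$, and an application of the global estimate (13) from Theorem 3.1 to $E\int_t^T|Y(s)|^2 ds$ gives (29) of the form claimed in (33) for all $t\in[T_1,T]$.

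To extend the estimate below $T_1$, I fix $i\geq 1$ and $t\in[T_{i+1},T_i]$, split
\[
Y(t)=\Psi^{T_i}(t)+\int_t^{T_i}g(t,s,Y(s),Z(t,s),Z(s,t))ds-\int_t^{T_i}Z(t,s)dW(s),
\]
with $\Psi^{T_i}(t)$ as in (31). Corollary 3.2 guarantees $\Psi^{T_i}(t)$ is $\mathcal{F}_{T_i}$-measurable, so the truncated equation above is a genuine BSVIE on $[T_{i+1},T_i]$ with terminal datum $\Psi^{T_i}(t)\in L^2_{\mathcal{F}_{T_i}}[T_{i+1},T_i]$. Repeating the one-shot absorption of the previous paragraph on each interval (possible since $T_i-T_{i+1}\leq\eta$) yields (30) with $\Psi^{T_i}$ in place of $\Psi$. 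It remains to estimate $E|\Psi^{T_i}(t)|^2+E\int_{T_i}^T|Z(t,s)|^2 ds$: for this I feed the BSDE (31), viewed as a BSDE on $[T_i,T]$ with driver $h(t,s,Z(t,s)):=g(t,s,Y(s),Z(t,s),Z(s,t))$, into Lemma 2.6, then bound the resulting $E(\int_{T_i}^T L(t,s)|Y(s)|ds)^2$ factor by $C\int_t^T |\Psi(s)|^2ds+C\int_t^T(\int_s^T|g_0(s,u)|du)^2ds$ via the already-established global estimate (13). This produces (32), and combining (30) with (32) gives the claimed bound (33) on $[T_{i+1},T_i]$.

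Finally, since $T_1$ was chosen depending only on $T$ and $A$, only finitely many steps $T=T_0>T_1>\cdots>T_k=0$ of length $\leq\eta$ are needed to cover $[0,T]$, so iterating the local-to-global step produces one set of constants $l_1,l_2,l_3,l_4$ depending only on $T$ and $\sup_t\int_t^T L(t,s)^{2+\epsilon}ds$. The only step I expect to require care is the first one: precisely tracking which $Z$-contributions can be absorbed and which must be iterated, since the whole argument collapses without the S-solution symmetry $Z(s,t)=Z(t,s)$ that makes absorption possible.
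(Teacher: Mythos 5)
Your proposal is correct and follows essentially the same route as the paper's own argument: the It\^o-isometry bound with H\"older, absorption of the $Z$-term via the symmetry $Z(s,t)=Z(t,s)$ on a short interval $[T_1,T]$, the splice through $\Psi^{T_i}$ estimated by Lemma 2.6 together with the global estimate (13), and the finite-partition iteration. The only nominal difference is a harmless mislabeling (the $\mathcal{F}_S$-measurability of $\Psi^S$ is Corollary 3.3, not 3.2); the mathematics is the same.
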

We also have:
\begin{corollary}
Let $(Y(\cdot ),Z(\cdot ,\cdot ))$ be the S-solution of (1). Assume
\begin{equation*}
\sup\limits_{t\in [0,T]}E|\Psi (t)|^2<\infty,\sup\limits_{t\in
[0,T]}E\left( \int_t^T|g_0(t,s)|ds\right) ^2<\infty,
\end{equation*}
then $\sup\limits_{t\in [0,T]}E|Y(t)|^2<\infty $ and
$\sup\limits_{t\in [0,T]}E\int_t^T|Z(t,s)|^2ds<\infty.$
\end{corollary}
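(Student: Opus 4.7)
The plan is to obtain Corollary 3.4 as an essentially immediate consequence of Corollary 3.3 by observing that each of the four terms on the right-hand side of the estimate (33) is uniformly bounded in $t \in [0,T]$ under the stated hypotheses. So the strategy is: verify that the hypotheses of Corollary 3.3 are met, apply the estimate (33) pointwise, and bound each summand using the two suprema assumed to be finite.

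First I would check that the hypotheses of Corollary 3.3 hold. The assumption $\sup_{t\in[0,T]}E|\Psi(t)|^2<\infty$ supplies the pointwise condition $E|\Psi(t)|^2<\infty$ and, by Tonelli's theorem applied to the nonnegative integrand $|\Psi(s)|^2$, also
\[
E\int_0^T|\Psi(s)|^2\,ds = \int_0^T E|\Psi(s)|^2\,ds \leq T\sup_{s\in[0,T]}E|\Psi(s)|^2 < \infty.
\]
Similarly, the assumption $\sup_{t\in[0,T]}E(\int_t^T|g_0(t,s)|ds)^2<\infty$ gives, again by Tonelli,
\[
E\int_0^T\Bigl(\int_s^T|g_0(s,u)|du\Bigr)^{\!2}ds \leq T\sup_{s\in[0,T]}E\Bigl(\int_s^T|g_0(s,u)|du\Bigr)^{\!2} <\infty.
\]
Thus Corollary 3.3 applies and (33) holds for every $t\in[0,T]$.

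Next I would substitute these uniform bounds into (33). Setting $M_1 := \sup_{t\in[0,T]}E|\Psi(t)|^2$ and $M_2 := \sup_{t\in[0,T]}E(\int_t^T|g_0(t,s)|ds)^2$, the four summands on the right of (33) are controlled by $l_1 M_2$, $l_2 M_1$, $l_3 T M_1$, and $l_4 T M_2$ respectively (using the same Tonelli step as above to handle the time-integrated terms). Adding these constants gives a bound that is independent of $t$, which yields simultaneously $\sup_{t\in[0,T]}E|Y(t)|^2<\infty$ and $\sup_{t\in[0,T]}E\int_t^T|Z(t,s)|^2\,ds<\infty$, as required. There is no real obstacle here; the only subtlety is the routine use of Tonelli to exchange expectation and time integration, which is legitimate because all integrands are nonnegative.
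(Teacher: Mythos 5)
Your proof is correct and follows exactly the route the paper intends: the statement is placed immediately after the corollary containing estimate (33) precisely because it is a direct consequence of it, and your Tonelli argument to verify the integrated hypotheses and to bound the right-hand side of (33) uniformly in $t$ is the right (and only needed) step. The only slip is a numbering one --- the corollary containing (33) is the fourth corollary of Section 3, not the third --- but your explicit reference to (33) makes the argument unambiguous.
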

Next we show the continuity of $Y(t)$ in $t$. We have:
\begin{corollary}
Let $(Y(\cdot ),Z(\cdot ,\cdot ))$ be the S-solution of (1), and
assume
\begin{equation}
\sup\limits_{t\in [0,T]}E\left( \int_t^T|g_0(t,s)|ds\right)
^2<\infty , \quad \sup\limits_{t\in [0,T]}E|\Psi (t)|^2<\infty ,
\end{equation}
then $\forall t,\overline{t}\in [0,T],$ we have
\begin{eqnarray}
&&E\left\{ |Y(t)-Y(\overline{t})|^2+\int_{t\vee \overline{t}}^T|Z(t,s)-Z(%
\overline{t},s)|^2ds\right\}  \notag \\
&\leq &CE\left\{ |\Psi (t)-\Psi (\overline{t})|^2+\int_{t\wedge \overline{t}%
}^{t\vee \overline{t}}|Z(t\wedge \overline{t},s)|^2ds\right.  \notag \\
&&+\left( \int_{t\vee \overline{t}}^T|g(t,s,Y(s),Z(t,s),Z(s,t))-g(\overline{t%
},s,Y(s),Z(t,s),Z(s,t))|ds\right) ^2  \notag \\
&&+\left. \left( \int_{t\wedge \overline{t}}^{t\vee
\overline{t}}|g(t\wedge
\overline{t},s,Y(s),Z(t\wedge \overline{t},s),Z(s,t\wedge \overline{t}%
))|ds\right) ^2\right\} .
\end{eqnarray}
Consequently, in the case that
\begin{equation}
\lim\limits_{|t-\overline{t}|\rightarrow 0}E|\Psi (t)-\Psi (\overline{t}%
)|^2=0,
\end{equation}
and $t\mapsto g(t,s,y,z,\zeta )$ is continuous in the sense that
\begin{eqnarray}
|g(t,s,y,z,\zeta )-g(\overline{t},s,y,z,\zeta )| &\leq
&C(1+|y|+|z|+|\zeta
|)\rho (|t-\overline{t}|), \\
\text{ } \forall t,\overline{t} &\in &[0,T],\text{ }s\in [t\vee
\overline{t},T],\text{ }y,z,\zeta \in R,  \notag
\end{eqnarray}
for some modulus of continuity $\rho (\cdot ),$ then we have
\begin{equation}
\lim\limits_{|t-\overline{t}|\rightarrow 0}\left(E|Y(t)-Y(\overline{t}%
)|^2+E\int_{t\vee
\overline{t}}^T|Z(t,s)-Z(\overline{t},s)|^2ds\right )=0.
\end{equation}
\end{corollary}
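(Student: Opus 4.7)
The plan is to first establish the a priori estimate (35) by viewing BSVIE (1) at a fixed $t$ as a BSDE in the running variable $r$ on $[\overline{t}, T]$, and then applying a standard BSDE stability estimate to compare with the analogous BSDE at $\overline{t}$. The continuity conclusion (38) will follow from (35) combined with the regularity assumptions (36)--(37) and the uniform bounds in Corollary 3.4.

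Assume without loss of generality that $t \leq \overline{t}$. For fixed $t$, introduce the BSDE
\[
\lambda(t,r) = \Psi(t) + \int_r^T g(t,s,Y(s),Z(t,s),Z(s,t))\,ds - \int_r^T Z(t,s)\,dW(s), \quad r \in [t,T],
\]
so that $\lambda(t,t) = Y(t)$ and $\lambda(\overline{t},\overline{t}) = Y(\overline{t})$. On the subinterval $r \in [\overline{t}, T]$, both $(\lambda(t,\cdot), Z(t,\cdot))$ and $(\lambda(\overline{t},\cdot), Z(\overline{t},\cdot))$ are classical BSDE solutions in the sense of Proposition 2.1. The key point is that the S-symmetry $Z(s,t) = Z(t,s)$ and $Z(s,\overline{t}) = Z(\overline{t},s)$ for $s \in [\overline{t}, T]$ turns the apparently exogenous third argument of $g$ into the $Z$-unknown of the respective BSDE, so each driver is genuinely Lipschitz in its own unknown. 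Applying the standard BSDE stability inequality and splitting the driver difference as $|g(t,s,Y,Z(t,s),Z(s,t)) - g(\overline{t},s,Y,Z(t,s),Z(s,\overline{t}))| \leq |\Delta g| + L(\overline{t},s)|Z(t,s) - Z(\overline{t},s)|$ (where $\Delta g := g(t,s,Y(s),Z(t,s),Z(s,t)) - g(\overline{t},s,Y(s),Z(t,s),Z(s,t))$, and the cross-term is absorbed into the LHS on short subintervals by Hölder in $s$ using the $L^{2+\epsilon}$-integrability of $L$, then iterated along $[\overline{t},T]$ as in Theorem 3.1) yields
\[
E|\lambda(t,\overline{t}) - Y(\overline{t})|^2 + E\int_{\overline{t}}^T |Z(t,s) - Z(\overline{t},s)|^2\,ds \leq CE|\Psi(t) - \Psi(\overline{t})|^2 + CE\left(\int_{\overline{t}}^T |\Delta g|\,ds\right)^2.
\]
From the definition of $\lambda$ we also have $Y(t) - \lambda(t,\overline{t}) = \int_t^{\overline{t}} g(t,s,Y(s),Z(t,s),Z(s,t))\,ds - \int_t^{\overline{t}} Z(t,s)\,dW(s)$, whence by Itô isometry $E|Y(t) - \lambda(t,\overline{t})|^2 \leq 2E(\int_t^{\overline{t}}|g|ds)^2 + 2E\int_t^{\overline{t}}|Z(t,s)|^2 ds$. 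The triangle inequality $|Y(t) - Y(\overline{t})|^2 \leq 2|Y(t) - \lambda(t,\overline{t})|^2 + 2|\lambda(t,\overline{t}) - Y(\overline{t})|^2$ then assembles the four terms on the RHS of (35).

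To deduce (38) from (35), I handle each term on the RHS of (35) separately as $|t - \overline{t}| \to 0$. The term $E|\Psi(t) - \Psi(\overline{t})|^2$ vanishes by (36). The $Z$-magnitude term $E\int_{t \wedge \overline{t}}^{t \vee \overline{t}}|Z(t \wedge \overline{t},s)|^2 ds$ vanishes by absolute continuity of the integral, which applies because $\sup_{t} E\int_t^T|Z(t,s)|^2 ds < \infty$ by Corollary 3.4. For the $g$-continuity term, assumption (37) together with Cauchy--Schwarz gives
\[
E\left(\int_{t \vee \overline{t}}^T|g(t,s,\cdot)-g(\overline{t},s,\cdot)|\,ds\right)^2 \leq C\rho(|t-\overline{t}|)^2\, T\, E\int_{t\vee \overline{t}}^T (1 + |Y(s)|^2 + |Z(t,s)|^2 + |Z(s,t)|^2)\,ds,
\]
which is $O(\rho(|t-\overline{t}|)^2) \to 0$ using Corollary 3.4 and the S-symmetry $Z(s,t) = Z(t,s)$ for $s \geq t \vee \overline{t}$. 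The final $g$-magnitude term is bounded via (H2), $|g(\cdots)| \leq |g_0(\cdot,s)| + L(\cdot,s)(|Y|+|Z|+|\zeta|)$, and Cauchy--Schwarz, by $C|t \vee \overline{t} - t \wedge \overline{t}|$ times an integral that is finite by Corollary 3.4; it too vanishes as $|t-\overline{t}| \to 0$.

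The main obstacle is the BSDE stability step in the second paragraph: the driver of the BSDE for $\lambda(t,\cdot)$ on $[\overline{t}, T]$ involves $Z(s,t)$, which would ordinarily be an external datum but here coincides with the $Z$-unknown $Z(t,s)$ by S-symmetry. Comparing it to the BSDE for $\lambda(\overline{t},\cdot)$ therefore produces a cross-term $L(\overline{t},s)|Z(t,s) - Z(\overline{t},s)|$ that is Lipschitz in the very quantity being estimated. Controlling it requires the short-interval absorption followed by the iteration scheme already used in the proof of Theorem 3.1. Once (35) is in hand, the deduction of (38) is routine analysis using Corollary 3.4 and dominated convergence.
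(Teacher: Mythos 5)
Your proposal is correct and follows essentially the same route as the paper: the paper obtains (35) by specializing estimate (9) of Lemma 2.1 to $h(t,s,z)\equiv g(t,s,Y(s),z,z)$ (exactly the reduction via S-symmetry that you carry out by hand through the BSDE-family/stability argument), and then passes to the limit using (36), (37), a Lipschitz--H\"older bound on the $g$-magnitude term and the uniform bound $\sup_{t}E\int_t^T|Z(t,s)|^2ds<\infty$ from Corollary 3.5. The one step you (and the paper) leave loose is the vanishing of $E\int_{t\wedge\overline{t}}^{t\vee\overline{t}}|Z(t\wedge\overline{t},s)|^2ds$, where ``absolute continuity of the integral'' does not literally apply because the integrand varies with $t\wedge\overline{t}$; some uniform-in-$t$ smallness of these integrals over short intervals is needed beyond the mere uniform bound.
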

\begin{proof}We can easily obtain (35) by (9) with
$h(t,s,z)\equiv g(t,s,Y(s),z,z)$. We have
\begin{eqnarray}
&&\ E\left( \int_{t\wedge \overline{t}}^{t\vee
\overline{t}}|g(t\wedge
\overline{t},s,Y(s),Z(t\wedge \overline{t},s),Z(s,t\wedge \overline{t}%
))|ds\right) ^2 \nonumber \\
\ &\leq &CE\left( \int_{t\wedge \overline{t}}^{t\vee \overline{t}%
}|g_0(t\wedge \overline{t},s)|ds\right) ^2+ C\int_{t\wedge \overline{t}%
}^{t\vee \overline{t}}|L(t\wedge \overline{t},s)|^2ds\cdot
E\int_{t\wedge
\overline{t}}^{t\vee \overline{t}}|Y(s)|^2ds  \nonumber \\
&&+ C\int_{t\wedge \overline{t}}^{t\vee \overline{t}}|L(t\wedge \overline{t}%
,s)|^2ds\cdot E\int_{t\wedge \overline{t}}^{t\vee
\overline{t}}|Z(t\wedge
\overline{t},s)|^2ds  \nonumber \\
&&+ C\int_{t\wedge \overline{t}}^{t\vee \overline{t}}|L(t\wedge \overline{t}%
,s)|^2ds\cdot E\int_{t\wedge \overline{t}}^{t\vee
\overline{t}}|Z(s,t\wedge \overline{t})|^2ds.
\end{eqnarray}
From Corollary 5 we have
\begin{equation}
\sup\limits_{t\in [0,T]}E\int_t^T|Z(t,s)|^2ds<\infty .
\end{equation}
From (36), (37), (39) and (40), (38) is obtained.
\end{proof}

\subsection{The relations between S-solutions and other solutions}
Let us consider the following BSVIE which is a generalization of
BSVIE in \cite{L}.
\begin{equation}
Y(t)=\Psi (t)+\int_t^Tf(t,s,Y(s),Z(t,s))ds-\int_t^TZ(t,s)dW(s).
\end{equation}
First we give a definition of the adapted solutions of BSVIEs.
\begin{definition}
Let $S\in [0,T]$. A pair of $(Y(\cdot ),Z(\cdot ,\cdot ))\in \mathcal{H}%
^2_1[S,T]$ is called an adapted solution of BSVIE (41) on $[S,T]$ if
(41) holds in the usual It\^o's sense for almost all $t\in [S,T].$
\end{definition}
There is a gap in \cite{L}. Now we can easily prove the existence
and uniqueness of adapted solution of (41) which is a generalization
of the result in \cite{L} and overcome the gap in \cite{L}. We can
claim:
\begin{theorem}
Let $f:\Delta ^c\times R^m\times R^{m\times d}\times \Omega
\rightarrow R^m$ be $\mathcal{B}(\Delta ^c\times R^m\times
R^{m\times d})\otimes \mathcal{F}_T $-measurable such that $s\mapsto
f(t,s,y,z)$ is $\mathbb{F}$-progressively measurable for all
$(t,y,z)\in [0,T]\times R^m\times R^{m\times d}$ and
\begin{equation*}
E\int_0^T\left( \int_t^T|f_0(t,s)|ds\right) ^2dt<\infty,
\end{equation*}
where $f_0(t,s)\equiv f(t,s,0,0).$ Moreover, we assume $\forall
(t,s)\in \Delta ^c$, $y,$ $\overline{y}\in R^m$, $z,$
$\overline{z}\in R^{m\times d},$
\begin{equation*}
|f(t,s,y,z)-f(t,s,\overline{y},\overline{z})|\leq L(t,s)(|y-\overline{y}|+|z-%
\overline{z}|),
\end{equation*}
where $L:\Delta ^c\rightarrow R$ is a deterministic function so that
for some $\epsilon >0,$
\begin{equation*}
\sup\limits_{t\in [0,T]}\int_t^TL(t,s)^{2+\epsilon }ds<\infty .
\end{equation*}
If $\Psi (\cdot )\in L_{\mathcal{F}_T}^2[0,T]$, then (41) admits a
unique adapted solution.
\end{theorem}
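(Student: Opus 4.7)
The plan is to deduce Theorem 3.2 as a direct corollary of the S-solution theory developed for the general BSVIE (1). Observe that (41) is exactly the special case of (1) in which $g(t,s,y,z,\zeta):=f(t,s,y,z)$ does not depend on the fourth argument. Under this identification, the hypotheses imposed on $f$ — the $\mathcal{B}(\Delta^c\times R^m\times R^{m\times d})\otimes\mathcal{F}_T$-measurability, the progressive measurability in $s$, the integrability $E\int_0^T(\int_t^T|f_0(t,s)|ds)^2 dt<\infty$, and the Lipschitz bound in $(y,z)$ with deterministic weight $L(t,s)$ having $\sup_t\int_t^T L(t,s)^{2+\epsilon}ds<\infty$ — are precisely the conditions (H2) for this $g$ (trivially in $\zeta$). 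Hence Theorem 3.1 applies.

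For existence, I would invoke Theorem 3.1 to produce an adapted S-solution $(Y(\cdot),Z(\cdot,\cdot))\in {}^{*}\mathcal{H}^2[0,T]$ of (1) with this choice of $g$. Since $g$ ignores its last slot, the driver reduces to $g(t,s,Y(s),Z(t,s),Z(s,t))=f(t,s,Y(s),Z(t,s))$, and the equation satisfied by the S-solution is literally (41) for a.e.\ $t\in[0,T]$. Restricting $Z$ to $\Delta^c[0,T]$ yields a pair in $\mathcal{H}^2_1[0,T]$: the $L^2$-integrability follows from membership in ${}^{*}\mathcal{H}^2[0,T]$, and the adaptedness requirement $Z(t,\cdot)\in L^2_{\mathbb{F}}[t,T]$ for a.e.\ $t$ is inherited because on $\Delta^c$ one has $t\vee s=s$, so the $\mathcal{F}_{t\vee s}$-measurability from $L^2_{\max;\mathbb{F}}$ becomes exactly $\mathcal{F}_s$-measurability. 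This produces an adapted solution of (41) in the sense of Definition 3.1.

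For uniqueness, let $(Y,Z)$ and $(\bar Y,\bar Z)$ be two adapted solutions of (41) in $\mathcal{H}^2_1[0,T]$. I would extend each $Z$ (resp.\ $\bar Z$) to the full square $[0,T]^2$ by the symmetric reflection $Z(t,s):=Z(s,t)$ for $(t,s)\in\Delta[0,T]$. The key check is that this extension lies in ${}^{*}\mathcal{H}^2[0,T]$: for $(t,s)\in\Delta$ we have $t>s$, so $t\vee s=t$, and the reflected value $Z(s,t)$ (with $s<t$) is $\mathcal{F}_t$-measurable by the adaptedness of $Z(s,\cdot)$ on $[s,T]$, matching the $\mathcal{F}_{t\vee s}$-measurability condition defining $L^2_{\max;\mathbb{F}}$; the $L^2$-norm on the full square is at most twice the original norm by Fubini. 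Since $f$ is independent of $\zeta=Z(s,t)$, the extended pair still satisfies (1), so it is an adapted S-solution of (1). Theorem 3.1 provides a unique such S-solution, forcing $(Y,Z)=(\bar Y,\bar Z)$ on $\Delta^c$, which is all that is required.

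The only genuine step is the measurability bookkeeping for the symmetric extension, and this is routine once one notices that the reflection across the diagonal leaves $t\vee s$ invariant. Everything else is a direct appeal to Theorem 3.1. This is precisely why the S-solution framework cleanly covers and extends the setting of \cite{L}: one never needs to produce $Z$ on $\Delta$ for its own sake, but pretending it is there symmetrically costs nothing and yields the desired well-posedness for (41) on $\Delta^c$.
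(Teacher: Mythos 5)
Your proof is correct and follows essentially the same route as the paper: both reduce (41) to Theorem 3.1 by recasting it as a BSVIE of type (1) whose generator satisfies (H2), the only (immaterial) difference being that the paper takes $g(t,s,y,z,\zeta)=f\bigl(t,s,y,\tfrac{z}{2}+\tfrac{\zeta}{2}\bigr)$ while you simply drop the $\zeta$-dependence — these coincide on S-solutions since $Z(t,s)=Z(s,t)$. Your explicit bookkeeping for passing between $\mathcal{H}^2_1[0,T]$ and ${}^{*}\mathcal{H}^2[0,T]$ via symmetric reflection (checking $\mathcal{F}_{t\vee s}$-measurability and the factor-of-two norm bound) is a detail the paper leaves implicit, and you carry it out correctly.
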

\begin{proof}
Let $ g(t,s,Y(s),Z(t,s),Z(s,t))\equiv
f(t,s,Y(s),\frac{Z(t,s)}2+\frac{Z(s,t)}2)$. We consider the
following BSVIE, $t\in [0,T],$
\begin{equation}
Y(t)=\Psi
(t)+\int_t^Tg(t,s,Y(s),Z(t,s),Z(s,t))ds-\int_t^TZ(t,s)dW(s).
\end{equation}
$\forall y,$ $\overline{y},$ $z,$ $\overline{z},$ $\zeta ,$
$\overline{\zeta }\in R,$ we have
\begin{eqnarray*}
&&\left| g(t,s,y,z,\zeta )-g(t,s,\overline{y},\overline{z},\overline{\zeta }%
)\right|  \\
&=&\left| f(t,s,y,,\frac z2+\frac \zeta 2)-f(t,s,\overline{y},\frac{%
\overline{z}}2+\frac{\overline{\zeta }}2)\right|  \\
&\leq &L(t,s)\left( \left| y-\overline{y}\right| +\left| \frac
z2+\frac
\zeta 2-\frac{\overline{z}}2-\frac{\overline{\zeta }}2\right| \right)  \\
&\leq &L(t,s)(|y-\overline{y}|+|z-\overline{z}|+|\zeta
-\overline{\zeta }|).
\end{eqnarray*}
So $g$ satisfies (H2). Then BSVIE (42) admits a unique S-solution on
$[0,T]$ by Theorem 3.1. Then we obtain the existence and uniqueness
of the adapted solution of (41) on $[0,T]$.
\end{proof}
\begin{remark}
On the other hand, for the S-solution of (1), due to $Z(t,s)\equiv
Z(s,t)$, we can let
$$f(t,s,Y(s),Z(t,s))\equiv g(t,s,Y(s),Z(t,s),Z(s,t)),$$ then (1) can be
transformed into (41). If we have obtained the adapted solution
$(Y(\cdot),Z(\cdot,\cdot))\in\mathcal {H}^2_1[0,T]$ for (41), then
we can get the S-solution of (1) in $^{*}\mathcal {H}^2[0,T]$ by
defining the value for $Z(t,s)\equiv Z(s,t).\quad (t,s)\in \Delta.$
\end{remark}
\begin{remark}
If BSVIE (1) degenerates to BSVIE (41), then the M-solution $(Y_1,
Z_1)$ and S-solution $(Y_2, Z_2)$ of (1) are identical in
$\Delta^c$, i.e., $Y_1(t)\equiv Y_2(t)$, $Z_1(t,s)\equiv Z_2(t,s),
\quad 0\leq t\leq s\leq T.$ But $Z_1(t,s)$ and $Z_2(t,s)$ may be
different in $\Delta.$ Now we give two examples to illustrate it.
Let's consider the following BSVIE
\begin{equation}
Y(t)=tTW(T)-\int_t^TtY(s)/s^2ds-\int_t^TZ(t,s)dW(s),\quad t\in
[T_1,T].
\end{equation}
Here $T_1>0$, $\Psi (t)=tTW(T)$,
$g(t,s,Y(t),Z(t,s),Z(s,t))=-tY(s)/s^2.$ It
is easy to check that $E\int_{T_1}^Tt^2T^2W^2(T)dt=(T^6-T^3T_1^3)/3<\infty $%
, and $g$ satisfies the assumption (H2).

It is obvious that $Y(t)=t^2W(t),$ $Z(t,s)=ts$ satisfies (43), thus
it is the unique S-solution of (43). We also know that BSVIE (43)
has a unique M-solution (see \cite{Y2}). But the M-solution is not
equal to the S-solution of (43). In fact, if the unique S-solution
of (43) also is the M-solution of (43), we have
\begin{eqnarray*}
t^2W(t)=Y(t)&=&E(\left. %
Y(t)\right| \mathcal{F}_{T_{1}})+\int_{T_1}^tZ(t,s)dW(s) \\
&=&t^2W(T_1)+\int_{T_1}^ttsdW(s)\\
&=&t^2W(T_1)+t^2W(t)-tW(T_1)T_1-t\int_{T_1}^tW(s)ds.
\end{eqnarray*}
 Thus
$$tW(T_1)T_1+t\int_{T_1}^tW(s)ds-t^2W(T_1)=0,\quad \forall t\in
[T_1,T],$$  then $$\frac{\int^{t_1}_{t_2}W(s)ds}{t_1-t_2}=W(T_1),
\quad \forall t_1, t_2\in [T_1,T],$$  which means that $W(t)=W(T_1)$
for any $t\in[T_1,T]$. Obviously it is a contradiction.

Now we give the explicit M-solution for (43). Let $Z(t,s)=ts,$
$T_1\leq t\leq s\leq T$, and $Y(t)=t^2W(t),$ $t\in[T_1,T].$ Because
$$E\int^t_{T_{1}}|D_sY(t)|^2ds=\int^t_{T_{1}}t^4I_{[0,t]}(s)ds<\infty,$$
then by Ocone-Clark formula (see \cite{D}) and the definition of
M-solution, we have $$Y(t)=E(\left.Y(t)\right|
\mathcal{F}_{T_{1}})+\int_{T_1}^tE(\left.D_sY(t)\right|
\mathcal{F}_{s})ds=E(\left.Y(t)\right|
\mathcal{F}_{T_{1}})+\int_{T_1}^tZ(t,s)dW(s).$$ Thus
\begin{equation*}
Z(t,s)=E(D_st^2W(t)|\mathcal{F}_s)=t^2,\text{ }T_1\leq s<t\leq T.
\end{equation*}
Therefore we obtain the M-solution of (43) as follows:

\begin{equation*}
\left\{
\begin{array}{lc}
Y(t)=t^2W(t), & t\in [T_1,T], \\
Z(t,s)=ts, & t,s\in\Delta^c[T_1,T], \\
Z(t,s)=t^2, & t,s\in\Delta[T_1,T].%
\end{array}
\right.
\end{equation*}

The above example is on $[T_1,T],\quad (T_1>0)$. Now we give an
example on $[0,T].$ Let's consider the following BSVIE, $t\in
[0,T],$
\begin{equation}
Y(t)=W(T)(T+1)(t+1)-\int_t^T\frac{(t+1)Y(s)}{(s+1)^2}ds-\int_t^TZ(t,s)dW(s),
\end{equation}
By the same method we can get the unique S-solution
$Y(t)=(t+1)^2W(t); Z(t,s)=(t+1)(s+1)$ of (44). From
\begin{equation*}
Z(t,s)=E(D_s(t+1)^2W(t)|\mathcal{F}_s)=(t+1)^2,\text{ }0\leq s<t\leq
T.
\end{equation*}
we know the unique M-solution of (44) is as follows:
\begin{equation*}
\left\{
\begin{array}{lc}
Y(t)=(t+1)^2W(t), & t\in [0,T], \\
Z(t,s)=(t+1)(s+1), & t,s\in\Delta^c, \\
Z(t,s)=(t+1)^2, & t,s\in\Delta.%
\end{array}
\right.
\end{equation*}
\end{remark}

\begin{remark}
From Remark 2 we know that when the generator of BSVIE is
independent of $Z(s,t)$ $0\leq s<t\leq T$, the M-solution and
S-solution can be equal in $\Delta^c$. However, the following
example show that when the generator depends on $Z(s,t)$ $0\leq
s<t\leq T$, the M-solution and S-solution can also be equal.
Let us consider the following BSVIE:
\begin{equation*}
Y(t)=\int_t^Tg(t,s,Y(s),Z(s,t))ds-\int_t^TZ(t,s)dW(s),\text{ }t\in
[0,T].
\end{equation*}
Here we assume (H2) holds and $g(t,s,0,0)\equiv 0$. We can easily
check that $Y(t)\equiv 0,Z(t,s)\equiv 0,$ $t,s\in [0,T]$ is not only
the unique M-solution, but also the unique S-solution.
\end{remark}

\subsection{An interesting result for S-solutions}

Now we give an interesting result for S-solutions. We consider the
following BSVIE: $t\in [0,T]$
\begin{equation}
Y(t)=\Psi
(t)+\int_t^Tg(t,s,Y(s),Z(t,s),Z(s,t))ds-\int_t^TZ(s,t)dW(s).
\end{equation}
We denote
\begin{equation*}
\overline{\mathcal{H}}^2[R,S]=L_{\mathbb{F}}^2[R,S]\times \overline{L}^2([R,S];L_{\mathbb{F}%
}^2[R,S]).
\end{equation*}
Here $\overline{L}^2([R,S];L_{\mathbb{F}}^2[R,S])$ is the set of all processes $%
z:[R,S]^2\times \Omega \rightarrow R^{m\times d}$ such that for almost all $%
t\in [R,S]$, $z(\cdot,t)\in L_{\mathbb{F}}^2[R,S]$ satisfying
\begin{equation*}
E\int_R^S\int_R^S|z(s,t)|^2dsdt<\infty.
\end{equation*}

We can define the norm of $\overline{\mathcal{H}}^2[R,S]$ as the
norm of $\mathcal{H}^2[R,S]$.
 We can define S-solution for (45). Obviously
(45) has a unique S-solution which is the same as the one of (1). By
the same method as in \cite{Y2}, we can also prove (45) admits a
unique M'-solution defined as follows.
\begin{definition}
Let $S\in [0,T]$. A pair of $(Y(\cdot ),Z(\cdot ,\cdot ))\in \overline{\mathcal{H}}%
^2_2[S,T]$ is called an adapted M'-solution of (45) on $[S,T]$, if
(45) holds in the usual It$\hat o$'s sense for almost all $t\in
[S,T]$ and, in addition, the following holds:
\begin{equation*}
Y(t)=E[Y(t)|\mathcal{F}_S]+\int_S^tZ(s,t)dW(s),\text{ a.e. }t\in
[S,T].
\end{equation*}
\end{definition}
We have the following proposition.
\begin{proposition}
We consider the following two equations: $t\in [0,T]$
\begin{equation}
Y_1(t)=\Psi
_1(t)+\int_t^Tg_1(t,s,Y_1(s),Z_1(t,s),Z_1(s,t))ds-\int_t^TZ_1(t,s)dW(s).
\end{equation}
\begin{equation}
Y_2(t)=\Psi
_2(t)+\int_t^Tg_2(t,s,Y_2(s),Z_2(t,s),Z_2(s,t))ds-\int_t^TZ_2(s,t)dW(s).
\end{equation}
We assume that for any $0\leq t\leq s\leq T$ and $y\in R^m, z,
\zeta\in R^{m\times d},$ $$\Psi _1(t)\equiv \Psi _2(t),\text{ }
g_1(t,s,y,z,\zeta)\equiv g_2(t,s,y,z,\zeta), \text{ } a.s.$$  then
we have
\begin{equation*}
Y_1(t)\equiv Y_2(t),\text{ }Z_1(t,s)\equiv Z_2(t,s),\text{ } \forall
t,s\in [0,T],
\end{equation*}
if and only if $(Y_i(\cdot ),Z_i(\cdot ,\cdot ))$, $(i=1,2)$, are
the S-solutions of (46) and (47), respectively.
\end{proposition}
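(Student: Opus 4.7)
The plan is to exploit the fact that the only formal difference between equations (46) and (47) lies in the diffusion term, namely $\int_t^T Z_1(t,s)\,dW(s)$ versus $\int_t^T Z_2(s,t)\,dW(s)$. When the symmetry condition $Z(t,s)\equiv Z(s,t)$ holds, these two integrals coincide, so the two equations collapse into the same BSVIE; conversely, if two solution pairs agree pointwise on $[0,T]^2$, the equality of the two stochastic integrals will force the symmetry via It\^o's isometry.

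For the sufficiency (``if'') direction, I would assume that $(Y_i(\cdot),Z_i(\cdot,\cdot))$ is the S-solution of the $i$-th equation ($i=1,2$). By definition of S-solution, $Z_2(s,t)=Z_2(t,s)$ for all $t,s\in[0,T]$, so the diffusion term of (47) satisfies $\int_t^T Z_2(s,t)\,dW(s)=\int_t^T Z_2(t,s)\,dW(s)$. Substituting this identity into (47), and using $\Psi_1\equiv\Psi_2$ and $g_1\equiv g_2$, one sees that $(Y_2,Z_2)$ satisfies equation (46) together with the symmetry condition, i.e., it is an S-solution of (46). Theorem 3.1 (uniqueness of the S-solution of (46)) then yields $Y_1\equiv Y_2$ and $Z_1(t,s)\equiv Z_2(t,s)$ on $\Delta^c$; by symmetry of both $Z_i$, the identity extends to all of $[0,T]^2$.

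For the necessity (``only if'') direction, assume $Y_1\equiv Y_2=:Y$ and $Z_1\equiv Z_2=:Z$ on $[0,T]^2$. Subtracting (46) from (47) with the common $\Psi$ and $g$ cancels the driver and drift, leaving
\begin{equation*}
\int_t^T Z(t,s)\,dW(s)=\int_t^T Z(s,t)\,dW(s),\quad \text{a.s., for a.e. }t\in[0,T].
\end{equation*}
Both sides are well-defined It\^o integrals (on the left, $s\mapsto Z(t,s)$ is $\mathbb{F}$-adapted on $[t,T]$; on the right, $s\mapsto Z(s,t)$ is $\mathcal{F}_s$-measurable for $s\geq t$, which is the standing assumption on the M$'$-solution space $\overline{\mathcal{H}}^2$). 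Applying It\^o's isometry to the difference gives
\begin{equation*}
E\int_t^T|Z(t,s)-Z(s,t)|^2\,ds=0,
\end{equation*}
and hence $Z(t,s)=Z(s,t)$ for almost every $(t,s)\in\Delta^c$, a.s. By the swap $(t,s)\leftrightarrow(s,t)$, this upgrades to a.e.\ symmetry on $[0,T]^2$, so both $(Y_i,Z_i)$ meet the S-solution symmetry condition, and since each already satisfies its own BSVIE, each is the S-solution of the corresponding equation.

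The only non-routine point is the ``only if'' direction: one must verify that the right-hand stochastic integral in (47) is a genuine It\^o integral in the adapted sense (so that It\^o's isometry is applicable) and that the resulting almost-everywhere symmetry is truly the S-solution condition required by Definition 2.2. Once this functional-analytic setup is in place, the rest is a direct comparison of the two equations together with Theorem 3.1.
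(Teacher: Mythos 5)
Your proposal is correct and follows essentially the same route as the paper: the ``if'' direction collapses (47) into (46) via the symmetry of $Z_2$ and invokes uniqueness, and the ``only if'' direction subtracts the two equations and applies It\^o's isometry to conclude $Z(t,s)=Z(s,t)$ on $\Delta^c$. Your added remarks on the adaptedness of $s\mapsto Z(s,t)$ (needed for the isometry) make explicit a point the paper leaves implicit, but the argument is the same.
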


\begin{proof}$\Leftarrow $: It is clear.

$\Rightarrow $: If $Y_1(t)\equiv Y_2(t)$, $Z_1(t,s)\equiv Z_2(t,s)$,
$\forall t,s\in[0,T],$ thus
$$g_1(t,s,Y_1(s),Z_1(t,s),Z_1(s,t))=g_2(t,s,Y_2(s),Z_2(t,s),Z_2(s,t)),$$
then $\int_t^T\left( Z_1(t,s)-Z_2(s,t)\right) dW(s)\equiv 0$,
furthermore,
\[
E\left( \int_t^T\left( Z_1(t,s)-Z_2(s,t)\right) dW(s)\right)
^2=E\int_t^T(Z_1(t,s)-Z_2(s,t))^2ds\equiv 0,\ \ t\in [0,T].
\]
So we have $Z_1(t,s)\equiv Z_2(s,t),$ $t\leq s.$ Because of the assumption of $%
Z_1(t,s)\equiv Z_2(t,s),$ $t,$ $s\in [0,T],$ we have $Z_2(t,s)\equiv Z_2(s,t),$ $(t\leq s)$%
, and the solution $(Y_2,Z_2)$ is the S-solution of (47). By a
similar method we can show $(Y_1,Z_1)$ is the S-solution of (46).
\end{proof}
\begin{remark}
From the proposition above, we know that when two kinds of equations
such as (46) and (47) have the same terminal condition and the same
generator, they have the same solution if and only if both of the
solutions are S-solution. Next we give an example to show this. We
consider the following two BSVIEs
\begin{eqnarray}
Y_1(t)
&=&tTW(T)-\int_t^T\frac{tY_1(s)}{s^2}ds-\int_t^TZ_1(t,s)dW(s),\quad
t\in [T_1,T]. \\
Y_2(t)
&=&tTW(T)-\int_t^T\frac{tY_2(s)}{s^2}ds-\int_t^TZ_2(s,t)dW(s),\quad
t\in [T_1,T].
\end{eqnarray}
Obviously (48) and (49) have the same S-solution, for $i=1,2$
\begin{equation*}
Y_i(t)=t^2W(t);\text{ }Z_i(t,s)=Z_i(s,t)=ts,\text{ }t,s\in[T_1,T],
\end{equation*}
however, the M-solution of (48) is not equal to the M'-solution of
(49). In fact, from Remark 2 we know that
$Y_1(t)=Y_2(t)=t^2W(t)\text{ } (t\in[T_1,T])$, $Z_1(t,s)=ts\text{ }
(t\leq s)$ and $Z_2(s,t)=ts\text{ } (t\leq s)$. We can also
determine $Z_1(t,s)\text{ } (t>s)$ by
\begin{equation}
Z_1(t,s)=E(D_st^2W(t)|\mathcal{F}_s)=t^2,\text{ }T_1\leq s<t\leq T.
\end{equation}
and $Z_2(s,t)\text{ } (t>s)$ by
\begin{equation}
Z_2(s,t)=E(D_st^2W(t)|\mathcal{F}_s)=t^2,\text{ }T_1\leq s<t\leq T.
\end{equation}
So we have $Z_1(t,s)=ts\neq s^2=Z_2(t,s)$ for $t<s$ and
$Z_1(t,s)=t^2\neq ts=Z_2(t,s)$ for $t>s$.

When neither the terminal conditions nor the generators are equal,
the conclusion above can hold too. For example, we can choose $\Psi
_1(t)=\Psi(t)+c$, $\Psi _2(t)=\Psi(t)-c$,
$g_1(t,s,y,z,\zeta)=g(t,s,y,z,\zeta)-c$ and
$g_2(t,s,y,z,\zeta)=g(t,s,y,z,\zeta)+c$, here $c>0$ is a constant.
We have $\Psi _1(t)\neq\Psi_2(t)$ and $g_1(t,s,y,z,\zeta)\neq
g_2(t,s,y,z,\zeta)$, but the conclusion still holds.
\end{remark}

\section{Dynamic risk measures by special BSVIEs}

In this section, we assume $m=d=1$ and $f$ is independent of $\omega
.$ We know that the following BSVIE admits a unique adapted
M-solution and a unique adapted S-solution when the generator and
the terminal condition satisfy certain conditions:
\begin{equation}
Y(t)=\psi (t)+\int_t^Tf(t,s,Y(s),Z(t,s))ds-\int_t^TZ(t,s)dW(s).
\end{equation}
From the definition of the M-solution and S-solution, we know that
both of them which solve (52) in the It\^o sense have the same value
in the following part
\begin{equation*}
(Y(t),Z(t,s)),\quad 0\leq t\leq s\leq T, \quad t\in [0,T],
\end{equation*}
and the only difference between the two kinds of solutions is the
value of
$$Z(t,s), \quad 0\leq s<t\leq T.$$
Now we will give a comparison theorem on S-solution for the
following BSVIE:
\begin{equation}
Y(t)=-\psi
(t)+\int_t^T(f(t,s,Y(s))+r_1(s)Z(t,s)+r_2(s)Z(s,t))ds-\int_t^TZ(t,s)dW(s).
\end{equation}
Here $r_i(s)$ are two deterministic functions which satisfy
$e^{\frac
12\int_0^Tr_i^2(s)ds}<\infty $. Thus we can determine the value $Y(t),$ $%
t\in [0,T]$ of S-solution to (53) by solving adapted solution of
(54)
\begin{equation}
Y(t)=-\psi
(t)+\int_t^T(f(t,s,Y(s))+(r_1(s)+r_2(s))Z(t,s))ds-\int_t^TZ(t,s)dW(s).
\end{equation}
And we can use Girsanov theorem to rewrite (54)
\begin{equation}
Y(t)=-\psi
(t)+\int_t^Tf(t,s,Y(s))ds-\int_t^TZ(t,s)d\widetilde{W}(s),
\end{equation}
where $\widetilde{W}(t)=W(t)+\int_0^t(r_1(s)+r_2(s))ds$ is a
Brownian motion under new probability measure $\widetilde{P}$
defined by
\begin{equation*}
\frac{d\widetilde{P}}{dP}(\omega )=\exp \left\{
\int_0^Tr(s)dW(s)-\frac 12\int_0^Tr^2(s)ds\right\} ,\text{
}r(s)=r_1(s)+r_2(s).
\end{equation*}
Before proving the comparison theorem for S-solution, we need the
following proposition in \cite{Y3}.

\begin{proposition}
We consider the following BSVIE
\begin{equation}
Y(t)=-\psi (t)+\int_t^Tf(t,s,Y(s),Z(s,t))ds-\int_t^TZ(t,s)dW(s).
\end{equation}
Let $f,$ $\overline{f}:$ $\Delta ^c\times R\times R^d\mapsto R$
satisfy (H2) (here we assume $L(t,s)$ is a bounded function),
and let $\psi (\cdot ),$ $\overline{\psi }(\cdot )\in L_{\mathcal{F}%
_T}^2[0,T]$ such that
\begin{eqnarray}
f(t,s,y,z) &\geq &\overline{f}(t,s,y,z),\text{ }\forall (t,s,y,z)\in
\Delta
^c\times R\times R^d, \\
\psi (t) &\leq &\overline{\psi }(t),\text{  a.s. } \quad t\in
[0,T],\text{ a.e.} \notag
\end{eqnarray}
Let $(Y(\cdot ),Z(\cdot ,\cdot ))$ be the adapted M-solution of
BSVIE (56), and $(\overline{Y}(\cdot ),\overline{Z}(\cdot ,\cdot ))$
be the adapted
M-solution of BSVIE (56) with $f$ and $\psi (\cdot )$ replaced by $%
\overline{f}$ and $\overline{\psi }(\cdot ),$ respectively. Then the
following holds:
\begin{equation*}
Y(t)\geq \overline{Y}(t),\text{a.s. } t\in [0,T],\text{ a.e.}
\end{equation*}
\end{proposition}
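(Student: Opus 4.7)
The natural approach is a linearization reducing the comparison to the non-negativity of the difference $\hat Y := Y - \bar Y$, combined with a duality identity enabled by the martingale-representation clause of the M-solution. Setting $\hat Z := Z - \bar Z$ and $\hat\psi := \psi - \bar\psi \le 0$, and using the Lipschitz continuity of $\bar f$ to decompose
\[
f(t,s,Y(s),Z(s,t)) - \bar f(t,s,\bar Y(s),\bar Z(s,t)) = \eta(t,s) + \alpha(t,s)\hat Y(s) + \beta(t,s)\hat Z(s,t),
\]
where $\eta(t,s) := f(t,s,Y(s),Z(s,t)) - \bar f(t,s,Y(s),Z(s,t)) \ge 0$ and $\alpha, \beta$ are the usual line-segment integrals of the partial derivatives (bounded by $L(t,s)$), the pair $(\hat Y, \hat Z)$ satisfies the linear-type BSVIE
\[
\hat Y(t) = -\hat\psi(t) + \int_t^T [\eta(t,s) + \alpha(t,s)\hat Y(s) + \beta(t,s)\hat Z(s,t)] ds - \int_t^T \hat Z(t,s) dW(s),
\]
and the goal reduces to showing $\hat Y(t) \ge 0$ almost surely for almost every $t \in [0,T]$.

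For each fixed $t$ I would read this as a linear BSDE in $s \in [t,T]$ for the unknown $(\hat Y(t), \hat Z(t,\cdot))$, with $\hat Y(s)$ and $\hat Z(s,t)$ ($s > t$) playing the role of exogenous sources. Since the driver depends on neither $\hat Y(t)$ nor $\hat Z(t,\cdot)$, taking $E[\cdot \mid \mathcal{F}_t]$ and applying Fubini produces the pointwise representation
\[
\hat Y(t) = E\Big[-\hat\psi(t) + \int_t^T \eta(t,s) ds \,\Big|\, \mathcal{F}_t\Big] + \int_t^T E\big[\alpha(t,s)\hat Y(s) + \beta(t,s)\hat Z(s,t) \,\big|\, \mathcal{F}_t\big] ds.
\]
The first term is non-negative; the remaining task is to show the second drift cannot overturn this sign.

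The M-solution property is the key ingredient used to control the $\beta \hat Z(s,t)$ contribution. By definition $\hat Y(s) = E[\hat Y(s)] + \int_0^s \hat Z(s,r) dW(r)$ on $[0,T]$, so for $t \le s$ the quantity $\hat Z(s,t)$ is precisely the integrand at time $t$ of this martingale representation, and in particular is $\mathcal{F}_t$-measurable. I would then introduce, for each fixed $t$, an adjoint positive weight $\Gamma^t(\cdot)$ solving a forward linear SDE on $[t,T]$ whose drift and diffusion coefficients are $\alpha(t,\cdot)$ and $\beta(t,\cdot)$ (essentially a Girsanov exponential parametrised by $t$). Applying It\^o's formula to $\Gamma^t(s)\hat Y(s)$ and invoking the M-solution identity together with a stochastic Fubini lets one recast $\int_t^T \beta(t,s)\hat Z(s,t) ds$ as exactly the cross-variation correction generated by $\Gamma^t$; after an integration by parts this produces the Feynman-Kac-type representation
\[
\hat Y(t) = E\Big[\Gamma^t(T)(-\hat\psi(t)) + \int_t^T \Gamma^t(s)\eta(t,s) ds \,\Big|\, \mathcal{F}_t\Big] \ge 0,
\]
since $\Gamma^t > 0$, $-\hat\psi(t) \ge 0$ and $\eta \ge 0$.

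The main obstacle I expect is making this duality step rigorous in the transposed setting where $\hat Z(s,t)$ (rather than $\hat Z(t,s)$) appears in the driver: the interchange of the $ds$-integration with the $dW(r)$-integration hidden inside $\hat Z(s,t)$ must be justified via stochastic Fubini, and the uniform-in-$t$ adaptedness and integrability of the family $\Gamma^t$ must be controlled using the boundedness of $L(t,s)$ together with the a priori estimates afforded by Theorem 3.1. Once these technical points are in place, the comparison inequality drops out immediately.
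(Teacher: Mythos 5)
First, a point of reference: the paper does not prove this proposition at all --- it is quoted from Yong [Y3] (``we need the following proposition in [Y3]''), so there is no in-paper argument to compare yours against. Judged on its own terms, your proposal has a genuine gap at its central step: the passage from the conditional-expectation identity to the Feynman--Kac representation via It\^o's formula applied to $\Gamma^t(s)\hat Y(s)$. The process $s\mapsto\hat Y(s)$ is the diagonal of the two-parameter field $\lambda(t,r)$ and is not a semimartingale with accessible dynamics: its martingale-representation decomposition is $\hat Y(s)=E[\hat Y(s)]+\int_0^s\hat Z(s,r)\,dW(r)$, where the integrand depends on the upper limit $s$, so there is no It\^o differential of the form $d\hat Y(s)=(\cdots)\,ds+\hat Z(s,t)\,dW(s)$ and hence no bracket $d\langle\Gamma^t,\hat Y\rangle_s=\beta(t,s)\hat Z(s,t)\,ds$. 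Extracting the value $\hat Z(s,t)$ at the \emph{fixed earlier} time $t$ from $\hat Y(s)$ is a Clark--Ocone/Malliavin-type operation, not a cross-variation; a Girsanov or adjoint weight can only absorb a term of the form $\beta(t,s)\hat Z(t,s)$ (same first index as the stochastic integral $\int_t^T\hat Z(t,s)\,dW(s)$). This is precisely why the paper needs the symmetry $Z(s,t)=Z(t,s)$ of S-solutions to run its Girsanov reduction of (53) to (55), and why that reduction is unavailable for M-solutions.

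Second, your argument proves too much, which is an independent signal that the duality step cannot be made rigorous: it would yield comparison with no monotonicity of $f$ in $y$, and that conclusion is false for BSVIEs. Take $f=\bar f$ with $f(t,s,y,\zeta)=-cy$, $c>0$ (this satisfies (H2) with bounded $L$), and deterministic $\psi\le\bar\psi$; then $Z\equiv\bar Z\equiv0$, the equations become deterministic Volterra equations, and $\hat Y(t)=h(t)-c\int_t^T e^{c(t-s)}h(s)\,ds$ with $h=\bar\psi-\psi\ge0$, which is strictly negative at $t=0$ whenever $h$ vanishes near $0$ and is positive on a later subinterval. The resolvent kernel of a Volterra equation, unlike the exponential weight of a BSDE, need not be nonnegative. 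The comparison theorem actually proved in [Y3] carries a monotonicity-in-$y$ hypothesis on the generator (dropped in the restatement here), and its proof is a monotone Picard-type iteration through families of BSDEs, applying the scalar BSDE comparison theorem at each stage; the monotonicity is exactly what makes the iteration order-preserving. Any correct proof must invoke that hypothesis somewhere, and yours never does.
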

We then have
\begin{lemma}
Let $f,\overline{f}:$ $\Delta ^c\times R\mapsto R$ satisfy (H2) (here we assume $L(t,s)$ to be bounded), and let $%
\psi (\cdot ),$ $\overline{\psi }(\cdot )\in
L_{\mathcal{F}_T}^2[0,T]$ such that
\begin{eqnarray}
f(t,s,y) &\geq &\overline{f}(t,s,y),\text{ }\forall (t,s,y)\in
\Delta
^c\times R, \\
\psi (t) &\leq &\overline{\psi }(t),\text{a.s. }\quad t\in [0,T].
\notag
\end{eqnarray}
Let $(Y(\cdot ),Z(\cdot ,\cdot ))$ be the adapted S-solution of
BSVIE (53), and $(\overline{Y}(\cdot ),\overline{Z}(\cdot ,\cdot ))$
be the adapted
S-solution of BSVIE (53) with $f$ and $\psi (\cdot )$ replaced by $%
\overline{f}$ and $\overline{\psi }(\cdot ),$ respectively. Then the
following holds:
\begin{equation*}
Y(t)\geq \overline{Y}(t),\text{ a.s.,}\text{ } \forall t\in [0,T],
\end{equation*}
\end{lemma}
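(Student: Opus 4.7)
My plan is to leverage the reduction argument sketched in the paragraph immediately preceding the lemma. Since $(Y(\cdot),Z(\cdot,\cdot))$ is the S-solution of BSVIE (53), the defining relation $Z(t,s)\equiv Z(s,t)$ collapses the drift $r_1(s)Z(t,s)+r_2(s)Z(s,t)$ into $(r_1(s)+r_2(s))Z(t,s)$, so (53) reduces to the auxiliary equation (54). The Girsanov transformation with density $\frac{d\widetilde{P}}{dP}=\exp\{\int_0^T r(s)dW(s)-\frac{1}{2}\int_0^T r^2(s)ds\}$, where $r=r_1+r_2$, absorbs this linear-in-$Z$ drift and rewrites (54) as (55) under $\widetilde{P}$, namely $Y(t)=-\psi(t)+\int_t^T f(t,s,Y(s))ds-\int_t^T Z(t,s)d\widetilde{W}(s)$. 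The hypothesis $e^{\frac{1}{2}\int_0^T r_i^2\,ds}<\infty$ ensures Novikov's condition and the equivalence of $\widetilde{P}$ with $P$.

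The same reduction applied to the $(\overline{f},\overline{\psi})$ data of the lemma produces an analogous $\widetilde{P}$-BSVIE for $(\overline{Y},\overline{Z})$. Equation (55) is a degenerate instance of BSVIE (56) from Proposition 4.1, since its generator depends on neither $Z(t,s)$ nor $Z(s,t)$. In particular, by the remark at the end of Section 3.3 (the M-solution and the S-solution agree on $\Delta^c$, hence in the $Y$-component, whenever the generator is independent of $Z(s,t)$ for $s<t$), the $Y$-component produced by Proposition 4.1's M-solution framework is precisely our $Y$. Applying Proposition 4.1 under $(\widetilde{P},\widetilde{W})$ to the two reduced BSVIEs, with $f\geq\overline{f}$ and $-\psi\geq-\overline{\psi}$ (which is just $\psi\leq\overline{\psi}$), yields $Y(t)\geq\overline{Y}(t)$, $\widetilde{P}$-a.s., for a.e.\ $t\in[0,T]$. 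Since $\widetilde{P}\sim P$, the same inequality holds $P$-a.s., which is the desired conclusion.

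The main obstacle I foresee is the bookkeeping needed to verify that Proposition 4.1 legitimately applies under the new measure $\widetilde{P}$: one must check that $\psi,\overline{\psi}$ remain in $L^2_{\mathcal{F}_T}[0,T]$ under $\widetilde{P}$ (which uses $L^p$-integrability of $d\widetilde{P}/dP$ for some $p>1$, a consequence of the exponential integrability of $r$, together with the $L^q$-integrability of the terminal data for $q$ conjugate to $p$), that $f(t,s,y)$ and $\overline{f}(t,s,y)$ still satisfy (H2) with bounded Lipschitz constant $L(t,s)$ (they do, since $f$ is deterministic and independent of $\omega$), and that $\mathbb{F}$-adaptedness is preserved (it is, since Girsanov leaves the filtration fixed). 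These checks are routine but deserve explicit mention before invoking Proposition 4.1 to close the argument.
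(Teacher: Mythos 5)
Your proposal follows essentially the same route as the paper's own proof: reduce (53) to (54) and then to (55) via the symmetry $Z(t,s)\equiv Z(s,t)$ and the Girsanov change of measure, identify the $Y$-component of the S-solution with that of the M-solution of (55) (the generator being independent of $Z$), apply the M-solution comparison result (Proposition 4.1) under $\widetilde{P}$, and transfer the inequality back to $P$ by equivalence of the measures. The integrability checks you flag at the end are legitimate (and in fact the paper passes over them silently), but they do not change the argument's structure.
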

\begin{proof}We assume $(Y_1(\cdot ),Z_1(\cdot ,\cdot ))$ is the
unique M-solution of BSVIE (55), and $(\overline{Y}_1(\cdot ),\overline{Z%
}_1(\cdot ,\cdot ))$ is the unique M-solution of (55) with $f$ and
$\psi (\cdot )$ replaced by $\overline{f}$ and $\overline{\psi
}(\cdot ),$
respectively. Clearly, we have: $t\in [0,T],$ $$\widetilde{P}\{\omega ;%
Y_1(t)=Y(t)\}=1; \quad  \widetilde{P}\{\omega ; \overline{Y}_1(t)=\overline{%
Y}(t)\}=1.$$ From Proposition 3, we know $$\widetilde{P}\{\omega
;Y_1(t)\geq \overline{Y}_1(t)\}=1,$$ then we have: $\forall t\in
[0,T]$, $\widetilde{P}\{\omega ;Y(t)\geq \overline{Y}(t)\}=1$, thus
$P\{\omega ;Y(t)\geq \overline{Y}(t)\}=1.$
\end{proof}
In what follows, we define
\begin{equation}
\rho (t;\psi (\cdot ))=Y(t),\quad \forall t\in [0,T],
\end{equation}
where $(Y(\cdot ),Z(\cdot ,\cdot ))$ is the unique adapted
S-solution of BSVIE (53).

\begin{lemma}
Let $f:\Delta ^c\times R\mapsto R$ satisfy (H2).

1) Suppose $f$ is sub-additive, i.e.,
\begin{equation*}
f(t,s,y_1+y_2)\leq f(t,s,y_1)+f(t,s,y_2),\text{ } (t,s)\in \Delta ^c,%
\text{ }y_1,y_2\in R,\text{a.e.,}
\end{equation*}
then $\psi (\cdot )\mapsto \rho (t;\psi (\cdot ))$ is sub-additive,
i.e.,
\begin{equation*}
\rho (t;\psi _1(\cdot )+\psi _2(\cdot ))\leq \rho (t;\psi _1(\cdot
))+\rho (t;\psi _2(\cdot )),\text{a.s.,} \quad  t\in [0,T].\text{
a.e.}
\end{equation*}
\end{lemma}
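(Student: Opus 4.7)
The plan is to reduce the sub-additivity statement for $\rho$ to the comparison theorem for S-solutions (the preceding Lemma) via a linearization trick. Let $(Y_i(\cdot),Z_i(\cdot,\cdot))$ be the unique adapted S-solution of (53) with $\psi$ replaced by $\psi_i$ for $i=1,2$, and let $(Y(\cdot),Z(\cdot,\cdot))$ be the unique adapted S-solution of (53) with terminal datum $\psi_1+\psi_2$. Define $\widetilde Y(t)=Y_1(t)+Y_2(t)$ and $\widetilde Z(t,s)=Z_1(t,s)+Z_2(t,s)$. Then $(\widetilde Y,\widetilde Z)\in {^*\mathcal{H}^2[0,T]}$ by linearity, and $\widetilde Z(t,s)=\widetilde Z(s,t)$ because each $Z_i$ is symmetric. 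Adding the two equations for $(Y_i,Z_i)$ and using the linearity of the drift in $Z(t,s)$ and $Z(s,t)$ yields
\begin{equation*}
\widetilde Y(t)=-(\psi_1+\psi_2)(t)+\int_t^T\!\Big[f(t,s,Y_1(s))+f(t,s,Y_2(s))+r_1(s)\widetilde Z(t,s)+r_2(s)\widetilde Z(s,t)\Big]ds-\int_t^T\!\widetilde Z(t,s)dW(s).
\end{equation*}

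The main step is to reinterpret this as a BSVIE of type (53) with a \emph{modified} generator that dominates $f$. Introduce
\begin{equation*}
\widehat f(t,s,y):=f(t,s,y)+\big[f(t,s,Y_1(s))+f(t,s,Y_2(s))-f(t,s,Y_1(s)+Y_2(s))\big].
\end{equation*}
The bracketed term is a measurable process depending only on $(t,s,\omega)$, so $\widehat f$ is Lipschitz in $y$ with the same function $L$; using the Lipschitz estimate $|f(t,s,Y_i)|\leq |f(t,s,0)|+L(t,s)|Y_i|$ together with $L$ bounded and $Y_i\in L_{\mathbb F}^2$, one checks that $\widehat f$ satisfies (H2). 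Moreover $\widehat f(t,s,\widetilde Y(s))=f(t,s,Y_1(s))+f(t,s,Y_2(s))$, so $(\widetilde Y,\widetilde Z)$ is precisely the (unique) adapted S-solution of BSVIE (53) with generator $\widehat f$ and terminal datum $-(\psi_1+\psi_2)$. The sub-additivity of $f$ in $y$, applied pointwise at $y_1=Y_1(s),\ y_2=Y_2(s)$, gives $f(t,s,Y_1(s))+f(t,s,Y_2(s))\geq f(t,s,Y_1(s)+Y_2(s))$, hence $\widehat f(t,s,y)\geq f(t,s,y)$ for all $(t,s,y)$.

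Now apply the comparison Lemma for S-solutions of BSVIE (53) to the pair $(\widehat f,-(\psi_1+\psi_2))$ and $(f,-(\psi_1+\psi_2))$: the terminal conditions are equal and $\widehat f\geq f$, so
\begin{equation*}
\widetilde Y(t)=Y_1(t)+Y_2(t)\ \geq\ Y(t)\quad\text{a.s., }t\in[0,T],
\end{equation*}
which by the definition $\rho(t;\psi(\cdot))=Y(t)$ is exactly
\begin{equation*}
\rho(t;\psi_1(\cdot)+\psi_2(\cdot))\leq \rho(t;\psi_1(\cdot))+\rho(t;\psi_2(\cdot)),\quad t\in[0,T],\ \text{a.s.}
\end{equation*}

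The only subtle step is verifying that $\widehat f$ satisfies (H2) and that $(\widetilde Y,\widetilde Z)$ is indeed the \emph{unique} S-solution of the modified BSVIE in $^{*}\mathcal{H}^2[0,T]$; once this is granted, everything else is a direct invocation of the comparison Lemma. The sub-additivity hypothesis is used at exactly one place, namely to produce the inequality $\widehat f\geq f$.
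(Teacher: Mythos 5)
Your proposal is correct and matches the paper's intent: the paper's own proof is the one-line remark ``We can get the conclusion by Lemma 4.1,'' i.e.\ exactly the reduction to the comparison theorem that you carry out, and your linearization via the shifted generator $\widehat f$ is the standard way to fill in the omitted details (note that $\widehat f$ is $\omega$-dependent through $Y_1,Y_2$, which is permitted since (H2) allows random generators). The only caveat, inherited from the paper rather than introduced by you, is that invoking Lemma 4.1 implicitly requires its extra hypothesis that $L(t,s)$ be bounded.
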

\begin{proof}We can get the conclusion by Lemma 4.1. \end{proof}

\begin{lemma}
1) If the generator of (53) is: $f(t,s,y)=\eta (s)y,$ with $\eta
(\cdot )$ being a deterministic integrable function, then $\psi
(\cdot )\mapsto \rho (t;\psi (\cdot ))$ is translation invariant,
i.e.,
\begin{equation*}
\rho (t;\psi (\cdot )+c)=\rho (t;\psi (\cdot ))-ce^{\int_t^T\eta (s)ds},\text{ a.s., }%
\text{ }t\in [0,T], \quad \forall c\in R.
\end{equation*}
In particular, if $\eta (\cdot )=0,$ then
\begin{equation*}
\rho (t;\psi (\cdot )+c)=\rho (t;\psi (\cdot ))-c,\text{ }t\in
[0,T],\text{ a.s., }\forall c\in R.
\end{equation*}

2) If $f:\Delta ^c\times R\mapsto R$ is positively homogeneous, i.e., $%
f(t,s,\lambda y)=\lambda f(t,s,y),$ $t,s\in \Delta ^c,$ a.s.,
$\forall \lambda \in R^{+},$ so is $\psi (\cdot )\mapsto \rho
(t;\psi (\cdot )).$
\end{lemma}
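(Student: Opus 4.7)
The plan for both parts is to guess a candidate S-solution for the perturbed problem in terms of the S-solution $(Y(\cdot),Z(\cdot,\cdot))$ of the original BSVIE (53) with terminal datum $\psi(\cdot)$, and then invoke the uniqueness of S-solutions given by Theorem 3.1 to conclude. Since $Z(t,s)\equiv Z(s,t)$ for an S-solution, (53) collapses along the diagonal reflection to
\begin{equation*}
Y(t)=-\psi(t)+\int_t^T\bigl(f(t,s,Y(s))+(r_1(s)+r_2(s))Z(t,s)\bigr)ds-\int_t^T Z(t,s)dW(s),
\end{equation*}
which under the Girsanov change to $\widetilde W$ becomes
\begin{equation*}
Y(t)=-\psi(t)+\int_t^T f(t,s,Y(s))ds-\int_t^T Z(t,s)d\widetilde W(s).
\end{equation*}
This is the form I will work with.

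For part 1, with $f(t,s,y)=\eta(s)y$, I propose the ansatz
\begin{equation*}
\overline Y(t)=Y(t)-c\,e^{\int_t^T\eta(s)ds},\qquad \overline Z(t,s)=Z(t,s).
\end{equation*}
Since $\overline Z$ is symmetric whenever $Z$ is, it remains to check that this pair solves (53) in the It\^o sense with terminal datum $\psi(\cdot)+c$. Using the identity
\begin{equation*}
\int_t^T \eta(s)\,e^{\int_s^T\eta(u)du}\,ds=e^{\int_t^T\eta(u)du}-1,
\end{equation*}
I compute $\int_t^T\eta(s)\overline Y(s)ds=\int_t^T\eta(s)Y(s)ds-c\,e^{\int_t^T\eta(u)du}+c$, and substituting into the right-hand side of (53) with $\psi$ replaced by $\psi+c$ (in the $\widetilde W$-form above) gives exactly $\overline Y(t)$. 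By Theorem 3.1 the S-solution is unique in ${}^{*}\mathcal H^2[0,T]$, so $\overline Y(t)=\rho(t;\psi(\cdot)+c)$, which is the desired formula. The special case $\eta\equiv 0$ is immediate.

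For part 2, given $\lambda>0$, set $(\overline Y,\overline Z)=(\lambda Y,\lambda Z)$; symmetry of $\overline Z$ is inherited from that of $Z$. Positive homogeneity of $f$ gives $f(t,s,\lambda Y(s))=\lambda f(t,s,Y(s))$, and because the $r_1,r_2$ terms and the stochastic integral are linear in $(Y,Z)$, plugging $(\lambda Y,\lambda Z)$ into the right-hand side of (53) with $\psi$ replaced by $\lambda\psi$ reproduces $\lambda Y(t)$. Again invoking uniqueness from Theorem 3.1, $\rho(t;\lambda\psi(\cdot))=\lambda Y(t)=\lambda\rho(t;\psi(\cdot))$.

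The only mildly subtle point is that the uniqueness of S-solutions must be applied in the ${}^{*}\mathcal H^2[0,T]$ sense, so I should remark that the candidate pairs $(\overline Y,\overline Z)$ lie in this space: this is automatic since they are affine (resp.\ linear) transforms of $(Y,Z)\in{}^{*}\mathcal H^2[0,T]$, with deterministic coefficients, and $c\,e^{\int_\cdot^T\eta(s)ds}$ is bounded on $[0,T]$. No genuine estimation is required; the whole argument is an ansatz-and-uniqueness verification, with the Girsanov reduction to (55) being the conceptual step that makes the ansatz visible.
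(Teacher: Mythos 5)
Your proposal is correct: the paper's own proof of this lemma is literally ``The result is obvious,'' and your ansatz-plus-uniqueness verification (shift $Y$ by $c\,e^{\int_t^T\eta(s)ds}$ keeping $Z$ fixed, resp.\ scale $(Y,Z)$ by $\lambda$, then invoke uniqueness of the S-solution from Theorem 3.1) is exactly the argument the authors leave implicit, and your key identity $\int_t^T\eta(s)e^{\int_s^T\eta(u)du}ds=e^{\int_t^T\eta(u)du}-1$ checks out. The Girsanov detour is harmless but unnecessary, since with $\overline Z=Z$ the candidate can be verified directly in (53) itself.
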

\begin{proof}The result is obvious.
\end{proof}
We then have
\begin{theorem}
Suppose $f(t,s,y)=\eta (s)y,$ with $\eta (\cdot )$ being a
deterministic bounded function, then $\rho (\cdot )$ defined by (59)
is a dynamic coherent risk measure.
\end{theorem}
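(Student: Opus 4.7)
The plan is to verify all five defining properties of a dynamic coherent risk measure, namely past independence and monotonicity (Definition 1.3) together with translation invariance, positive homogeneity, and sub-additivity (Definition 1.4). The bulk of the work has already been done in Lemmas 4.1--4.3, so my proof is mostly a matter of recording the right invocations and checking that $f(t,s,y)=\eta(s)y$ triggers each hypothesis.

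First I would verify past independence. The key observation is that the S-solution $(Y(\cdot),Z(\cdot,\cdot))$ of BSVIE (53) restricted to $[t,T]$ still solves the same equation on $[t,T]$ with the same terminal data $\psi|_{[t,T]}$, and by Theorem 3.1 this solution is unique. Hence if $\psi(s)=\overline{\psi}(s)$ a.s.\ for $s\in[t,T]$, the two S-solutions coincide on $[t,T]$, giving $\rho(s;\psi(\cdot))=\rho(s;\overline{\psi}(\cdot))$ for $s\in[t,T]$. For monotonicity I apply Lemma 4.1 with $\overline{f}=f$: if $\psi(s)\le\overline{\psi}(s)$ a.s.\ for $s\in[t,T]$, then on $[t,T]$ the terminal datum satisfies $-\psi(s)\ge -\overline{\psi}(s)$, so Lemma 4.1 immediately yields $Y(s)\ge\overline{Y}(s)$, i.e., $\rho(s;\psi(\cdot))\ge\rho(s;\overline{\psi}(\cdot))$ for $s\in[t,T]$. (Here I am using past independence again to replace $\psi$ by any function that agrees with it on $[t,T]$ and reduce to Lemma 4.1 applied on $[0,T]$.)

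Next I would dispatch the three coherent-measure axioms. Translation invariance with $r(s)=\eta(s)$ is precisely part 1) of Lemma 4.3, and the integrability and boundedness hypothesis on $\eta(\cdot)$ guarantees that the required exponential discount factor $e^{\int_t^T\eta(s)ds}$ is well-defined. Positive homogeneity follows from Lemma 4.3 part 2), after the trivial check that $f(t,s,\lambda y)=\eta(s)\lambda y=\lambda f(t,s,y)$ for every $\lambda\ge 0$. For sub-additivity, observe that linearity of $f$ in $y$ gives
\begin{equation*}
f(t,s,y_1+y_2)=\eta(s)(y_1+y_2)=f(t,s,y_1)+f(t,s,y_2),
\end{equation*}
so $f$ is in particular sub-additive, and Lemma 4.2 then produces $\rho(t;\psi_1(\cdot)+\psi_2(\cdot))\le\rho(t;\psi_1(\cdot))+\rho(t;\psi_2(\cdot))$.

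The only nontrivial point I foresee is checking that all invocations are legitimate under the standing hypothesis of the theorem. Specifically, $f(t,s,y)=\eta(s)y$ clearly satisfies (H2) with Lipschitz coefficient $L(t,s)=|\eta(s)|$, which is bounded by assumption; this ensures that Lemmas 4.1, 4.2, 4.3 all apply, and that the $\eta(\cdot)$ appearing in the translation-invariance axiom plays the role of the deterministic integrable function $r(\cdot)$ in Definition 1.4. Assembling these observations, the definition of $\rho$ via (59) is well-posed and satisfies all five axioms, so $\rho$ is a dynamic coherent risk measure. The main (and essentially only) obstacle is the bookkeeping step of past independence, since it is the one property not directly read off from a previous lemma; everything else reduces to citing the lemmas and checking a one-line algebraic identity.
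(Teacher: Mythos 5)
Your proposal is correct and takes essentially the same route as the paper: the paper's own proof is the single sentence that the conclusion follows from the preceding lemmas, and you simply make explicit how each of the five axioms is obtained from Lemmas 4.1--4.3, together with the uniqueness/restriction argument for past independence that the paper leaves implicit. No gaps.
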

\begin{proof}
It is not difficult to obtain the conclusion by the above lemmas.
\end{proof}

\section*{Acknowledgments}

The authors would like to thank the referee and the editor for their
helpful comments and suggestions. The authors are also grateful to
Prof. Jiongmin Yong for providing the paper [16].


\end{document}